\theoremstyle{plain}
\newtheorem{thm}{Theorem}[section]
\newtheorem{prp}{Proposition}[section]
\newtheorem{lem}{Lemma}[section]
\theoremstyle{definition}
\theoremstyle{remark}
\newtheorem{rmk}{Remark}[section]
\newtheorem{ex}{Example}[section]
\numberwithin{equation}{section}
\newcommand{\Z}{\mathbb{Z}}
\newcommand{\R}{\mathbb{R}}
\newcommand{\C}{\mathbb{C}}
\newcommand{\pa}{\partial}
\newcommand{\eps}{\varepsilon}
\newcommand{\jb}[1]{\langle #1 \rangle}
\newcommand{\Jb}[1]{\bigl\langle #1 \bigr\rangle}
\newcommand{\JB}[1]{\Bigl\langle #1 \Bigr\rangle}
\newcommand{\sh}[1]{{#1}^{\sharp}}
\newcommand{\tm}{\tilde{m}}
\DeclareMathOperator{\realpart}{\rm Re}
\DeclareMathOperator{\imagpart}{\rm Im}
\newcommand{\dis}{\displaystyle}
\begin{document}
\title{
 On Schr\"odinger systems with cubic 
dissipative nonlinearities of derivative type\\
 }

\author{
           Chunhua Li\thanks{
              Department of Mathematics, College of Science, 
              Yanbian University. 
              977 Gongyuan Road, Yanji, Jilin Province, 133002, China. 
              (E-mail: {\tt sxlch@ybu.edu.cn})
             }
           \and  
          Hideaki Sunagawa \thanks{
              Department of Mathematics, Graduate School of Science, 
              Osaka University. 
              1-1 Machikaneyama-cho, Toyonaka, Osaka 560-0043, Japan. 
              (E-mail: {\tt sunagawa@math.sci.osaka-u.ac.jp})
             }
}

\date{\today }   
\maketitle

\noindent{\bf Abstract:}\ 
Consider the initial value problem for systems of cubic derivative 
nonlinear Schr\"odinger equations in one space dimension with the masses 
satisfying a suitable resonance relation. We give 
structural conditions on the nonlinearity under which the small data 
solution gains an additional logarithmic decay as $t \to +\infty$ 
compared with the corresponding free evolution.  \\

\noindent{\bf Key Words:}\ 
Derivative nonlinear Schr\"odinger systems; Nonlinear dissipation; 
Logarithmic time-decay.\\

\noindent{\bf 2010 Mathematics Subject Classification:}\ 
35Q55, 35B40

\section{Introduction}  \label{sec_intro}
Consider the initial value problem for the system of nonlinear Schr\"odinger equations of the following type:
\begin{align}
\left\{\begin{array}{cl}
\mathcal{L}_{m_j} u_j=F_j(u,\pa_x u), & t>0,\ x\in \R,\ j=1,\ldots, N,\\
u_j(0,x)=\varphi_j(x), & x \in \R,\ j=1,\ldots, N,
\end{array}\right.
\label{nls_system}
\end{align}
where $\mathcal{L}_{m_{j}}=i\pa_t +\frac{1}{2m_{j}}\pa_x^2$, 
$i=\sqrt{-1}$, $m_j \in \R\backslash \{0\}$, and 
$u=(u_j(t,x))_{1\le j \le N}$ is a $\C^N$-valued unknown function. 
The nonlinear term $F=(F_j)_{1\le j\le N}$ is always assumed to be 
a cubic homogeneous polynomial in 
$(u,\pa_x u, \overline{u}, \overline{\pa_x u})$. 
Our main interest is how the combinations of $(m_j)_{1\le j \le N}$ and 
the structures of $(F_j)_{1\le j \le N}$ affect large-time behavior of 
the solution $u$ to \eqref{nls_system}. 
Before going into details, let us first recall some known results briefly 
and clarify our motivation.

One of the most typical nonlinear Schr\"odinger equations appearing in 
various physical settings is 
\begin{align}
 i\pa_t u + \frac{1}{2}\pa_x^2 u =\lambda |u|^2 u, \qquad
 t>0,\ x\in \R
 \label{nls_single}
\end{align}
with $\lambda \in \R$. What is interesting in \eqref{nls_single} is 
that the large-time behavior of the solution is actually affected by the 
nonlinearity even if the initial data is sufficiently small, smooth and 
decaying fast as $|x|\to \infty$. To be more precise, it is shown in \cite{HN} 
that the solution to \eqref{nls_single} with small initial data behaves like 
\[
 u(t,x)=\frac{1}{\sqrt{it}} \alpha(x/t) 
 e^{i\{\frac{x^2}{2t}  - \lambda |\alpha(x/t)|^2 \log t \}}
 +o(t^{-1/2})
\quad \mbox{as} \ \ t\to \infty
\]
with a suitable $\C$-valued function $\alpha(y)$. 
An important consequence of this asymptotic expression is that the solution 
decays like $O(t^{-1/2})$ in $L^{\infty}(\R_{x})$, 
while it does not behave like the free solution unless $\lambda=0$. 
In other words, the additional logarithmic factor in the phase 
reflects the long-range character of the cubic nonlinear Schr\"odinger 
equations in one space dimension.  If $\lambda \in \C$, another kind of 
long-range effect can be observed. Indeed, it is verified in \cite{Shim} 
that the small data solution to \eqref{nls_single} decays like 
$O(t^{-1/2} (\log t)^{-1/2})$ in $L^{\infty}(\R_{x})$ as $t \to \infty$ 
if $\imagpart \lambda <0$ (see also \cite{Su2}). 
This gain of additional logarithmic time decay should be interpreted 
as another kind of long-range effect. 
Among  several extensions of this result 
(see e.g., \cite{HNS}, \cite{Kim}, \cite{KitaShim1}, \cite{KitaShim2}, 
\cite{LH} etc. and the references cited therein), 
let us focus on the following two cases: 
(i) the case where the nonlinearity depends also on $\pa_x u$, 
and (ii) the case of systems. 

\begin{itemize}

\item[(i)] 
Let us consider the single nonlinear Schr\"odinger equation
\begin{align}
 i\pa_t u + \frac{1}{2}\pa_x^2 u = G(u,\pa_x u), \qquad 
 t>0,\ x\in \R,
 \label{dnls_single}
\end{align}
where $G$ is a cubic homogeneous polynomial in 
$(u,\pa_x u, \overline{u}, \overline{\pa_x u})$ with complex 
coefficients, and satisfies the gauge invariance 
\begin{align}
 G(e^{i\theta} v,e^{i\theta} w) =e^{i\theta} G(v,w),
 \qquad 
 \theta\in \R,\ (v,w) \in \C \times \C.
 \label{gi}
\end{align}
According to \cite{HNS}, the solution to \eqref{dnls_single} decays like 
$O(t^{-1/2} (\log t)^{-1/2})$ in $L^{\infty}(\R_{x})$ as $t \to \infty$ if 
\begin{align}
 \sup_{\xi \in \R} \imagpart G(1,i\xi) <0.
\label{dissip_single}
\end{align}
However, the approach of \cite{HNS} does not work well in the case of systems, 
because this additional logarithmic decay result is a consequence of the 
explicit asymptotic profile of the solution $u(t,x)$, 
which becomes no longer simple in the coupled case.

\item[(ii)] 
For nonlinear Schr\"odinger systems, an additional logarithmic decay result 
is first obtained by \cite{KLS}. 
Strictly saying, two-dimensional quadratic nonlinear Schr\"odinger systems 
are treated in \cite{KLS}, but we can adopt the method of \cite{KLS} 
directly to one-dimensional cubic nonlinear Schr\"odinger systems, 
as pointed in \cite{Kim}. When we restrict ourselves to a two-component model
\begin{align}
 \left\{\begin{array}{l}
 \mathcal{L}_{m_1} u_1 =\lambda_1 |u_1|^2 u_1 + \nu_1 \overline{u_1}^2 u_2,\\
 \mathcal{L}_{m_2} u_2 =\lambda_2 |u_2|^2 u_2 + \nu_2 u_1^3,
 \end{array}\right.
\qquad 
 t>0,\ x\in \R \label{nls_two}
\end{align}
with $\lambda_1$, $\lambda_2$, $\nu_1$, $\nu_2 \in \C$
and $m_{1},m_{2} \in \R \backslash \{0\}$, 
then the result of \cite{KLS} can be read as follows: 
the solution to \eqref{nls_two} decays like $O(t^{-1/2} (\log t)^{-1/2})$ in 
$L^{\infty}(\R_{x})$ as $t \to \infty$ if 
\begin{align}
 & m_2=3m_1, \label{mass_resonance}\\
 & \imagpart \lambda_j<0,\qquad j=1,2, \label{dissip}
\end{align}
and 
\begin{align}
 \kappa_1 \nu_1=\kappa_2 \overline{\nu_2}
 \quad \mbox{with some $\kappa_1$, $\kappa_2>0$}
 \label{interaction}
\end{align}
(see Example 2.1 in \cite{Kim} for the detail).
The advantage of the method of \cite{KLS} is that it does not 
rely on the explicit asymptotic profile at all. However, it is 
not straightforward to apply this approach in the derivative 
nonlinear case, because we need suitable pointwise a priori estimates 
not only for the solution itself but also for its derivatives 
without breaking good structure in order to apply the 
method of \cite{KLS}.
\end{itemize}

The purpose of this paper is to unify  (i) and (ii). 
More precisely, we will introduce structural conditions on 
$(F_j)_{1\le j \le N}$ and $(m_j)_{1\le j \le N}$ 
under which the small data solution to the derivative nonlinear 
Schr\"odinger system \eqref{nls_system} gains an additional logarithmic 
decay as $t \to +\infty$ compared with the corresponding free evolution.  

\section{Main Results}  \label{sec_result}
In the subsequent sections, we will use the following notations: 
We set $I_N=\{1,\ldots, N\}$ and 
$\sh{I}_N =\{1,\ldots, N, N+1,\ldots, 2N \}$. 
For $z =(z_j)_{j\in I_N}\in \C^N$, we write 
\[
 \sh{z} =(\sh{z}_k)_{k \in \sh{I}_N}
 :=(z_1,\ldots,z_N, \overline{z_1},\ldots, \overline{z_N})
\in \C^{2N}.
\]
Then general cubic nonlinear term $F=(F_j)_{j \in I_N}$ can be written as 
\begin{align*}
 F_j(u,\pa_x u)
 =\sum_{l_1, l_2,l_3=0}^{1}\sum_{k_1, k_2, k_3 \in \sh{I}_N}
 C_{j, k_1, k_2, k_3}^{l_1,l_2,l_3} (\pa_x^{l_1} \sh{u}_{k_1}) 
 (\pa_x^{l_2} \sh{u}_{k_2}) (\pa_x^{l_3} \sh{u}_{k_3})
\end{align*}
with suitable $C_{j, k_1, k_2, k_3}^{l_1,l_2,l_3} \in \C$. 
With this expression of $F$, we define 
$p=(p_j(\xi;Y))_{j \in I_N}:\R \times \C^N \to \C^N$ by  
\begin{align*}
 p_j(\xi; Y)
 :=
 \sum_{l_1, l_2,l_3=0}^{1}\sum_{k_1, k_2, k_3 \in \sh{I}_N}
 C_{j, k_1, k_2, k_3}^{l_1,l_2,l_3} (i\tm_{k_1} \xi)^{l_1}
 (i\tm_{k_2} \xi)^{l_2} (i\tm_{k_3} \xi)^{l_3} 
 \sh{Y}_{k_1} \sh{Y}_{k_2} \sh{Y}_{k_3} 
\end{align*}
for $\xi \in \R$ and $Y=(Y_j)_{j \in I_N} \in \C^N$, where 
\[
 \tm_k=\left\{\begin{array}{cl}
 m_k& (k=1,\ldots, N),\\[4mm]
 -m_{(k-N)} & (k = N+1,\ldots, 2N).
 \end{array}\right.
\]
In what follows, we denote by $\jb{\cdot, \cdot}_{\C^N}$ the standard scalar 
product in $\C^N$, i.e., 
$$
 \jb{z,w}_{\C^N}=\sum_{j=1}^{N}  z_j \overline{w_j}
$$
for $z=(z_j)_{j \in I_N}$ and $w=(w_j)_{j \in I_N} \in \C^N$. 

Now let us introduce the following conditions: 
\begin{itemize}
\item[(a)] For all $j \in I_N$ and 
$k_1,k_2, k_3 \in \sh{I}_N$, 
\[
 m_j\ne  \tm_{k_1} + \tm_{k_2} + \tm_{k_3}
 \ \ \mbox{implies}\ \ 
 C_{j,k_1,k_2,k_3}^{l_1,l_2,l_3}=0,\ \ 
 l_1, l_2, l_3 \in \{0,1\}.
\]
\item[(b$_0$)] 
There exists an $N\times N$ positive Hermitian matrix $A$ such that 
\[
 \imagpart \jb{p(\xi;Y), AY}_{\C^N}\le 0 
\]
for all $(\xi,Y) \in \R\times \C^N$. 
\item[(b$_1$)] 
There exist an $N\times N$ positive Hermitian matrix $A$ and 
a positive constant $C_*$ such that 
\[
 \imagpart \jb{p(\xi;Y), AY}_{\C^N}\le -C_* |Y|^4 
\]
for all $(\xi,Y) \in \R\times \C^N$. 
\item[(b$_2$)] 
There exist an $N\times N$ positive Hermitian matrix $A$ and 
a positive constant $C_{**}$ such that 
\[
 \imagpart \jb{p(\xi;Y), AY}_{\C^N}\le -C_{**} \jb{\xi}^2 |Y|^4
\]
for all $(\xi,Y) \in \R\times \C^N$, where $\jb{\xi}=\sqrt{1+\xi^2}$. 
\item[(b$_3$)] 
$p(\xi;Y)=0$ for all $(\xi,Y) \in \R\times \C^N$. 
\end{itemize}

To state the main results, we introduce some function spaces. For 
$s, \sigma \in \Z_{\ge 0}$, 
we denote by $H^s$ the $L^2$-based Sobolev space of order $s$, 
and the weighted Sobolev space $H^{s,\sigma}$ is defined by 
$\{\phi \in L^2\, |\, \jb{x}^{\sigma} \phi \in H^s \}$, equipped with 
the norm $\|\phi\|_{H^{s,\sigma}}=\|\jb{x}^{\sigma} \phi\|_{H^s}$. 
The main results are as follows: 
\begin{thm} \label{thm_sdge}
Assume the conditions (a) and (b$_0$) are satisfied. Let 
$\varphi=(\varphi_j)_{j \in I_ N} \in H^3\cap H^{2,1}$, 
and assume $\eps:=\|\varphi\|_{H^3}+\|\varphi\|_{H^{2,1}}$ is sufficiently 
small. Then \eqref{nls_system} admits a unique global solution 
$u=(u_j)_{j \in I_N} \in C([0,\infty); H^3\cap H^{2,1})$. 
Moreover we have
\[
 \|u(t)\|_{L^{\infty}} \le \frac{C\eps}{\sqrt{1+t}}, \qquad 
 \|u(t)\|_{L^2} \le C\eps
\]
for $t\ge 0$, where $C$ is a positive constant not depending on $\eps$.\\
\end{thm}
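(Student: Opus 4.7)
The strategy combines the factorization method of Hayashi--Naumkin--Sunagawa for derivative NLS with the matrix--Lyapunov idea of Katayama--Li--Sunagawa for systems. Local well-posedness in $H^{3}\cap H^{2,1}$ is by a standard fixed-point argument for quasilinear Schr\"odinger systems, so all the work goes into establishing global a priori bounds via a continuity argument.

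The first main ingredient is an energy estimate in $H^{3}$ augmented by a Galilean weight. Introduce, for each $j$, the generator $\mathcal{J}_{j}(t):=x+(it/m_{j})\pa_{x}$, which commutes with $\mathcal{L}_{m_{j}}$ and factorizes as $\mathcal{J}_{j}=(it/m_{j})e^{im_{j}x^{2}/(2t)}\pa_{x}(e^{-im_{j}x^{2}/(2t)}\,\cdot\,)$. The key algebraic point is that condition (a) yields a Leibniz-type identity: whenever $C_{j,k_{1},k_{2},k_{3}}^{l_{1},l_{2},l_{3}}\neq 0$, the resonance $m_{j}=\tm_{k_{1}}+\tm_{k_{2}}+\tm_{k_{3}}$ gives
$$
\mathcal{J}_{j}\!\prod_{i=1}^{3}\pa_{x}^{l_{i}}\sh{u}_{k_{i}}
=\sum_{i=1}^{3}\frac{\tm_{k_{i}}}{m_{j}}\bigl(\sh{\mathcal{J}}_{k_{i}}\pa_{x}^{l_{i}}\sh{u}_{k_{i}}\bigr)\!\prod_{l\ne i}\pa_{x}^{l_{l}}\sh{u}_{k_{l}},
$$
with $\sh{\mathcal{J}}_{k}:=x+(it/\tm_{k})\pa_{x}$, so that $\mathcal{J}_{j}F_{j}$ retains the same cubic--derivative shape as $F_{j}$, with one factor replaced by $\mathcal{J}u$. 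Under a running bootstrap hypothesis $\|u(t)\|_{L^{\infty}}\le 2C_{0}\eps/\sqrt{1+t}$, standard Schr\"odinger energy estimates then yield $\sum_{j}(\|u_{j}(t)\|_{H^{3}}+\|\mathcal{J}_{j}(t)u_{j}(t)\|_{H^{2}})\lesssim \eps(1+t)^{C\eps^{2}}$.

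Next, apply the MDFM factorization $u_{j}(t,x)=(it/m_{j})^{-1/2}e^{im_{j}x^{2}/(2t)}\alpha_{j}(t,m_{j}x/t)+r_{j}(t,x)$, whose remainder satisfies $\|r_{j}(t)\|_{L^{\infty}}=O(t^{-1/2-\delta})$ thanks to the above $\mathcal{J}$-bound. Using the leading-order relation $\pa_{x}u_{j}\sim i\tm_{k}\xi\cdot(\text{main part})$ when substituting into $F_{j}$, one derives the profile ODE
$$
i\pa_{t}\alpha_{j}(t,\xi)=\frac{1}{t}p_{j}(\xi;\alpha(t,\xi))+R_{j}(t,\xi),\qquad \|R(t)\|_{L^{\infty}_{\xi}}=O(t^{-1-\delta}),
$$
with $p$ the symbol defined in Section~2. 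Setting $\Phi(t,\xi):=\jb{\alpha(t,\xi),A\alpha(t,\xi)}_{\C^{N}}$ and differentiating along this ODE, the Hermitian symmetry of $A$ together with (b$_{0}$) gives
$$
\pa_{t}\Phi(t,\xi)=\frac{2}{t}\imagpart\jb{p(\xi;\alpha),A\alpha}_{\C^{N}}+O(t^{-1-\delta})\le O(t^{-1-\delta}),
$$
so that $\Phi(t,\xi)\le\Phi(1,\xi)+C\eps^{2}$ uniformly in $\xi$. Positivity of $A$ yields $|\alpha(t,\xi)|\le C\eps$, and the factorization converts this into $\|u(t)\|_{L^{\infty}}\le C\eps/\sqrt{1+t}$, closing the continuity loop. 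The uniform $L^{2}$ bound follows from the analogous Lyapunov computation after integrating in $\xi$ and invoking Plancherel, modulo an integrable-in-$t$ remainder.

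\textbf{Main obstacle.} The essential technical difficulty, absent in the semilinear KLS setting, is synchronizing the $\mathcal{J}$-energy estimate (which tolerates only polynomial $C\eps^{2}$-growth) with the factorization remainder control, so that the profile ODE has a genuinely integrable-in-$t$ remainder $R$. This requires the above Leibniz identity to interact cleanly with the extra $\pa_{x}$ factors appearing in $F_{j}(u,\pa_{x}u)$ and a delicate handling of how $\pa_{x}$ passes through the MDFM operator. The weak hypothesis (b$_{0}$) is just enough to bound $\Phi$ along the ODE, with no logarithmic gain beyond the free-evolution rate $t^{-1/2}$.
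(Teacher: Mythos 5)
Your overall architecture — local existence plus bootstrap, the Leibniz identity for $J_{m}$ under the mass-resonance condition (a), the $\mathcal{M}\mathcal{D}\mathcal{F}\mathcal{W}$ factorization leading to the profile ODE $i\pa_t\alpha_j=\frac1t p_j(\xi;\alpha)+\rho_j$, and the Lyapunov function $\jb{\alpha,A\alpha}_{\C^N}$ whose time derivative is signed by (b$_0$) — is exactly the paper's strategy. However, there is a genuine gap at the step you dismiss as ``standard Schr\"odinger energy estimates.'' Because $F_j$ depends on $\pa_x u$, the top-order energy identity for $v_{jl}=\pa_x^{3-l}J_{m_j}^l u_j$ produces terms of the form $\jb{S_j v_{jl}, q_{1,jk}\,\pa_x v_{kl}}_{L^2}$ and, worse, $\jb{S_j v_{jl}, q_{2,jk}\,\overline{\pa_x v_{kl}}}_{L^2}$ with quadratic coefficients $q_{i,jk}$: these involve one more derivative than the energy controls. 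For a single equation with gauge-invariant, suitably symmetric nonlinearity one can sometimes integrate by parts, but for a general system with complex coefficients (and in particular for the conjugated terms $\overline{\pa_x v_k}$) no such cancellation is available, so the naive $H^3$ estimate does not close. The paper resolves this by conjugating with the Hayashi--Naumkin--Pipolo smoothing operator $S_{\Phi,m}$ built from the Hilbert transform (Lemma \ref{lemma_smoothing} and Lemma \ref{lemma_aux2}), which generates a positive term $\frac{1}{|m|}\int\Phi\,|S|\pa_x|^{1/2}v|^2dx$ absorbing the derivative loss for small $\eps$. Your proposal contains no mechanism playing this role, and without one the claimed bound $\|u_j\|_{H^3}+\|J_{m_j}u_j\|_{H^2}\lesssim\eps(1+t)^{C\eps^2}$ is unjustified; this is precisely the obstruction the authors identify as the reason the KLS method does not transfer ``straightforwardly'' to the derivative case.

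A secondary, smaller issue: your bootstrap quantity tracks only $\|\alpha(t,\cdot)\|_{L^\infty_\xi}$ and $\|u(t)\|_{L^\infty}$, but the energy step requires decay of $\|u(t)\|_{W^{2,\infty}}$ (the coefficients $q_{i,jk}$ are quadratic in $(u,\pa_x u)$, and the weight $\Phi$ involves $|\pa_x u|^2$). Recovering $\|u\|_{W^{2,\infty}}\lesssim \eps t^{-1/2}$ from the profile forces you to propagate the weighted bound $\sup_\xi\jb{\xi}^2|\alpha_j(t,\xi)|\le C\eps$, which in turn requires the remainder estimate $|\rho_j(t,\xi)|\lesssim \jb{\xi}^{-2}t^{-5/4+\gamma}$ obtained in the paper by running the factorization argument for $\pa_x^l F_j$ with $l=0,1,2$ simultaneously (Proposition \ref{prp_main}). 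Your unweighted $O(t^{-1-\delta})$ remainder statement is not strong enough to close this part of the loop.
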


\begin{thm} \label{thm_decay1}
Assume the conditions (a) and (b$_1$) are satisfied. Let $u$ 
be the global solution to \eqref{nls_system}, whose existence is guaranteed 
by Theorem \ref{thm_sdge}. Then we have 
\[
 \|u(t)\|_{L^{\infty}} \le \frac{C\eps }{\sqrt{(1+t)\{1+ \eps^2\log (2+t)\}}}
\]
for $t\ge 0$, where $C$ is a positive constant not depending on $\eps$. 
We also have
\[
 \lim_{t\to +\infty} \|u(t)\|_{L^2}=0.
\]
\end{thm}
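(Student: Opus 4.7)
The plan is to work on the Fourier side via the standard factorization of the free Schr\"odinger group $e^{it\pa_x^2/(2m_j)}=\mathcal{M}_{m_j}(t)\mathcal{D}_{m_j}(t)\mathcal{F}\mathcal{M}_{m_j}(t)$, and introduce profiles $\alpha_j(t,\xi)$ so that
\begin{equation*}
 u_j(t,x) \approx \sqrt{\tfrac{m_j}{it}}\,e^{im_j x^2/(2t)}\,\alpha_j\bigl(t,\tfrac{m_j x}{t}\bigr),
\end{equation*}
with remainder controlled by the $H^{2,1}$-norm. Substituting this ansatz into \eqref{nls_system} and invoking the mass resonance (a) to discard non-resonant phases (which yield time-integrable oscillatory errors), while observing that each factor $\pa_x u_k$ contributes at leading order the multiplier $i\tm_k\xi$ on the profile, one derives the approximate ODE
\begin{equation*}
 i\pa_t \alpha_j(t,\xi) = \tfrac{1}{t}\,p_j\bigl(\xi;\alpha(t,\xi)\bigr) + R_j(t,\xi),
\end{equation*}
with remainder satisfying $\|R(t,\cdot)\|_{L^\infty_\xi}+\|R(t,\cdot)\|_{L^2_\xi} \le C\eps^3(1+t)^{-1-\delta}$ for some $\delta>0$, thanks to the $H^3\cap H^{2,1}$ control from Theorem \ref{thm_sdge}.

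Setting $g(t,\xi):=\jb{A\alpha(t,\xi),\alpha(t,\xi)}_{\C^N}$ and differentiating along the profile ODE yields
\begin{align*}
 \pa_t g(t,\xi)
 &= \tfrac{2}{t}\imagpart \jb{p(\xi;\alpha),A\alpha}_{\C^N} + 2\realpart \jb{AR,\alpha}_{\C^N}\\
 &\le -\tfrac{2C_*}{t}|\alpha|^4 + C|R||\alpha|
\end{align*}
by condition (b$_1$). Since $A$ is positive Hermitian, $g\asymp|\alpha|^2$, and this is a Bernoulli-type differential inequality pointwise in $\xi$. ODE comparison (treating the $R$-term perturbatively via its integrable-in-$t$ bound) then yields
\begin{equation*}
 |\alpha(t,\xi)|^2 \le \frac{C\eps^2}{1+\eps^2\log(2+t)}
\end{equation*}
uniformly in $\xi$, which transfers through the factorization to the asserted $L^{\infty}$-estimate on $u(t)$.

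For the $L^2$-statement, dropping the nonpositive dissipative term in the above inequality and integrating in $t$ furnishes the envelope
\begin{equation*}
 \sup_{t\ge 0} g(t,\xi) \le g(0,\xi) + 2\|A\|\int_0^\infty |R(s,\xi)||\alpha(s,\xi)|\,ds =: M(\xi),
\end{equation*}
and Fubini combined with Cauchy-Schwarz in $\xi$ shows $M\in L^1(\R_\xi)$. On the other hand, the pointwise-in-$\xi$ ODE forces $g(t,\xi)\to 0$ as $t\to\infty$ for a.e.\ $\xi$: otherwise the dominant term $-cg^2/t$ combined with $\int_1^\infty dt/t=\infty$ would drive $1/g(t,\xi)$ to $+\infty$, contradicting the summability of the perturbation. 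Lebesgue's dominated convergence theorem with majorant $M$ then gives $\|\alpha(t)\|_{L^2}^2\to 0$, and Plancherel-type identities transfer this to $\|u(t)\|_{L^2}\to 0$. The main obstacle will be the clean derivation of the profile equation with a time-integrable remainder in the derivative nonlinear setting --- exactly the difficulty flagged in the introduction --- since one must ensure that the $\pa_x u_k$-terms produce the $i\tm_k\xi$-factors of $p$ without spoiling the good dissipative structure encoded in condition (b$_1$).
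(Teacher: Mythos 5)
Your proposal is correct and follows essentially the same route as the paper: the profile equation $i\pa_t\alpha=\frac1t p(\xi;\alpha)+\rho$ with integrable remainder (Proposition \ref{prp_main} together with the a priori bounds from Theorem \ref{thm_sdge}), the quadratic form $\jb{\alpha,A\alpha}_{\C^N}$ satisfying a Bernoulli-type differential inequality via (b$_1$), the ODE comparison step (which is exactly Lemma \ref{lem_mats} from [KMS]), transfer back through $\mathcal{U}_m=\mathcal{M}_m\mathcal{D}\mathcal{F}_m\mathcal{M}_m$ via Lemma \ref{lemma_asympt}, and dominated convergence for the $L^2$ decay. The only cosmetic differences are that the paper obtains $|\alpha|^2\le C/\log t$ for $t\ge2$ and then interpolates with the $O(\eps^2/(1+t))$ bound to reach the stated form, and uses the uniform majorant $C\eps^2\jb{\xi}^{-4}$ with the Fatou/reverse-Fatou argument rather than your constructed envelope $M(\xi)$.
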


\begin{thm} \label{thm_decay2}
Assume the conditions (a) and (b$_2$) are satisfied. Let $u$ be as above. 
Then we have 
\[
 \|u(t)\|_{L^{2}} \le \frac{C\eps}{\sqrt{1+\eps^2 \log (2+t)}}
\] 
for $t\ge 0$, where $C$ is a positive constant not depending on $\eps$.\\
\end{thm}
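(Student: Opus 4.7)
To prove this $L^2$ decay, my plan is to pass to the asymptotic profile of $u$, derive a Bernoulli-type differential inequality for a norm equivalent to $\|u(t)\|_{L^2}^2$, and combine condition (b$_2$) with a Cauchy--Schwarz estimate to extract the $1/\sqrt{\log t}$ decay rate.

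Specifically, for each $j$ I introduce the profile $\alpha_j(t,\xi)$ through the usual WKB-type factorization $u_j(t, x) \approx (it/m_j)^{-1/2} e^{i m_j x^2/(2t)} \alpha_j(t, m_j x/t)$; equivalently $\hat{u_j}(t, \xi) = e^{-it\xi^2/(2m_j)} \alpha_j(t, \xi)$ up to normalization, so that $\|\alpha_j(t)\|_{L^2} = \|u_j(t)\|_{L^2}$ by Plancherel. The approximation $\pa_x u_j \sim i m_j (x/t) u_j$ translates on the profile side to multiplication by $i \tm_k \xi$, which is precisely how derivatives are accounted for in the definition of $p_j$. Inserting these expressions into $F_j(u, \pa_x u)$, the mass-resonance condition (a) eliminates the cubic interactions whose phase $\frac{t}{2}(m_j-\tm_{k_1}-\tm_{k_2}-\tm_{k_3})\eta^2$ does not vanish identically, and one arrives at the ODE system
\begin{equation*}
i \pa_t \alpha_j(t, \xi) = \frac{1}{t} p_j(\xi; \alpha(t, \xi)) + \rho_j(t, \xi), \qquad j \in I_N,
\end{equation*}
with a remainder $\rho_j$ controlled by the weighted Sobolev norm of $u$.

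Set $E(t) := \jbf{A\alpha(t), \alpha(t)}_{L^2_\xi}$; since $A$ is positive Hermitian, $E(t)$ is equivalent to $\|u(t)\|_{L^2}^2$. Multiplying the $j$-th equation by $(A\alpha)_j$, summing, taking the imaginary part, integrating in $\xi$, and using (b$_2$) give
\begin{equation*}
E'(t) \le -\frac{2C_{**}}{t} \int_\R \jb{\xi}^2 |\alpha(t,\xi)|^4 \, d\xi + R(t),
\end{equation*}
where $R(t) := 2\int_\R \bigl|\jbf{\rho(t,\xi), A\alpha(t,\xi)}_{\C^N}\bigr| \, d\xi$. The key elementary estimate, by Cauchy--Schwarz applied to $\jb{\xi}^{-1}$ and $\jb{\xi}|\alpha|^{2}$, is
\begin{equation*}
\|\alpha(t)\|_{L^2}^4 = \left( \int_\R \jb{\xi}^{-1} \cdot \jb{\xi} |\alpha(t,\xi)|^2 \, d\xi \right)^{\!2} \le \pi \int_\R \jb{\xi}^2 |\alpha(t,\xi)|^4 \, d\xi,
\end{equation*}
using $\int_\R \jb{\xi}^{-2} d\xi = \pi$. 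Consequently the dissipation dominates a constant multiple of $E(t)^2$, yielding the Bernoulli-type inequality
\begin{equation*}
E'(t) \le -\frac{c}{t} E(t)^2 + R(t)
\end{equation*}
for some $c>0$. Provided $R$ is integrable in $t$, elementary integration of this ODE together with the a priori bound $E(t) \le C\eps^2$ from Theorem~\ref{thm_sdge} (applied at some fixed starting time) produces $E(t) \le C\eps^2/\bigl(1 + \eps^2 \log(2+t)\bigr)$, which is the claimed estimate after taking square roots.

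The main obstacle is verifying that $R$ is harmless, i.e.\ $R \in L^1([1,\infty); dt)$. Two kinds of errors enter $\rho_j$: discrepancies between $u_j$ (respectively $\pa_x u_j$) and its factorized leading form, generically of size $t^{-3/2}$ times weighted Sobolev norms of $u$; and the non-resonant cubic terms, which must be handled by a single integration by parts in $t$, exploiting non-stationarity of the phase to gain an additional factor of $t^{-1}$. Using the a priori control $\|u(t)\|_{H^3 \cap H^{2,1}} \le C\eps$ furnished by Theorem~\ref{thm_sdge}, one expects an estimate of the form $\|\rho(t)\|_{L^2} = O(\eps^3 t^{-1-\delta})$ for some $\delta>0$, whence $R(t) = O(\eps^4 t^{-1-\delta})$. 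Establishing this decay of $\rho$ uniformly in $\xi$ (or at least in $L^2_\xi$) without breaking the structural sign of the dissipation is the technical heart of the proof; once achieved, the conclusion follows from the ODE argument above.
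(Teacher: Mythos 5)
Your argument is correct in substance, but it reaches the conclusion by a genuinely different route from the paper's. The paper fixes $\xi$ and applies the ODE lemma of [KMS] to the scalar $\Psi(t)=\jb{\alpha(t,\xi),A\alpha(t,\xi)}_{\C^N}$, using the $\jb{\xi}^2$ weight in (b$_2$) to produce the \emph{pointwise} bound $|\alpha(t,\xi)|^2\le C\jb{\xi}^{-2}(\log t)^{-1}$, and only then passes to $L^2$ via $\|\alpha(t)\|_{L^2}\le C\sup_{\xi}\bigl(\jb{\xi}|\alpha(t,\xi)|\bigr)$. You instead integrate in $\xi$ first and close a single Bernoulli inequality for $E(t)\sim\|u(t)\|_{L^2}^2$, absorbing the weight through the Cauchy--Schwarz step $\|\alpha\|_{L^2}^4\le\pi\int_{\R}\jb{\xi}^2|\alpha|^4\,d\xi$. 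This is a clean dual use of (b$_2$) --- the same $\jb{\xi}^2$ that the paper spends on $\xi$-decay of the pointwise bound, you spend on making the quartic dissipation dominate $E(t)^2$ --- and it yields the $L^2$ decay directly. What the paper's route buys is economy: the pointwise-in-$\xi$ machinery is already required for Theorem \ref{thm_decay1} (condition (b$_1$) gives no $\xi$-weight, so one cannot integrate there), and Theorem \ref{thm_decay2} then follows by rerunning the identical argument with the extra weight. Two remarks on the loose ends you flag. First, under condition (a) the non-resonant cubic interactions have identically vanishing coefficients, so no integration by parts in $t$ is needed; the only contributions to $\rho_j$ are the factorization discrepancies. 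Second, the remainder bound you defer is exactly Proposition \ref{prp_main} combined with the a priori estimate $\jb{\xi}^2|\alpha_j(t,\xi)|\le C\eps$ from Lemma \ref{lem_apriori}, which gives $|\rho_j(t,\xi)|\le C\eps^2\jb{\xi}^{-2}t^{-5/4+\gamma}$ with $\gamma<1/4$; this is more than enough for $R\in L^1([1,\infty))$ with the smallness needed to recover the stated $\eps$-dependence after intersecting with $E(t)\le C\eps^2$.
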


\begin{thm} \label{thm_asymp_free}
Assume the conditions (a) and (b$_3$) are satisfied. Let $u$ be as above. 
For each $j\in I_N$, there exists $\varphi_j^+ \in L^2(\R_{x})$ with 
$\hat{\varphi}_j^+ \in L^{\infty}(\R_{\xi})$ such that 
\[
 u_j(t)
 =
 e^{i\frac{t}{2m_j}\pa_x^2} \varphi_j^+ + O(t^{-1/4+\delta})
 \quad \mbox{in}\ L^2(\R_x)
\]
and 
\[
 u_j(t,x)=\sqrt{\frac{m_j}{i t}}\, \hat{\varphi}^+_j
 \left( \frac{m_j x}{t}\right) e^{i\frac{m_j x^2}{2t}} 
 + O(t^{-3/4+ \delta})
 \quad \mbox{in}\ L^{\infty}(\R_x) 
\]
as $t \to +\infty$, where $\delta>0$ can be taken arbitrarily small,
and $\hat{\phi}$ denotes the Fourier transform of $\phi$, i.e., 
\[
 \hat{\phi}(\xi)=\frac{1}{\sqrt{2\pi}} \int_{\R} e^{-iy\xi} \phi(y) dy.
\]
\end{thm}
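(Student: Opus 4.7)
The plan is to analyze the evolution of the profile $\alpha_j(t,\xi):=\mathcal{F}[\mathcal{U}_{m_j}(-t)u_j(t)](\xi)$, where $\mathcal{U}_{m}(t)=e^{i\frac{t}{2m}\pa_x^2}$, and to show under (b$_3$) that it converges in $L^2_\xi\cap L^\infty_\xi$ as $t\to +\infty$ at the rate $t^{-1/4+\delta}$. Since (b$_3$) trivially implies (b$_0$), Theorem~\ref{thm_sdge} furnishes the global solution $u$ with $\|u(t)\|_{L^2}\le C\eps$ and $\|u(t)\|_{L^\infty}\le C\eps(1+t)^{-1/2}$; the argument establishing Theorem~\ref{thm_sdge} should moreover deliver the auxiliary bound $\|J_{m_j}u_j(t)\|_{L^2}+\|\pa_x u_j(t)\|_{L^2}\le C\eps(1+t)^{\delta}$ for some small $\delta>0$, where $J_{m_j}=x+\frac{it}{m_j}\pa_x$. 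From these one obtains the familiar MDFM-type factorization
\begin{equation*}
 u_k(t,x)=\sqrt{\tfrac{m_k}{it}}\,e^{i\frac{m_kx^2}{2t}}\alpha_k\!\left(t,\tfrac{m_kx}{t}\right)+r_k(t,x),\qquad \|r_k(t)\|_{L^\infty}\le Ct^{-3/4+\delta},
\end{equation*}
together with a similar expansion of $\pa_x\sh{u}_k$ whose leading part is $(i\tm_k x/t)\sh{u}_k$ and a conjugate counterpart for $\overline{u_k}$ (with $\tm_{k+N}=-m_k$ and $\sh{\alpha}_{k+N}(t,\xi)=\overline{\alpha_k(t,-\xi)}$).

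Differentiating in $t$ gives
\begin{equation*}
 i\pa_t\alpha_j(t,\xi)=e^{i\frac{t\xi^{2}}{2m_j}}\mathcal{F}\bigl[F_j(u,\pa_xu)(t,\cdot)\bigr](\xi).
\end{equation*}
Inserting the expansions into each cubic monomial $C_{j,k_1,k_2,k_3}^{l_1,l_2,l_3}\prod_i\pa_x^{l_i}\sh{u}_{k_i}$, the leading contribution carries the oscillatory factor $e^{i(\sum_i\tm_{k_i})x^2/(2t)}$. After multiplication by $e^{it\xi^2/(2m_j)}$ and integration against $e^{-ix\xi}$, the complete exponent reads
\begin{equation*}
 \frac{\sum_i\tm_{k_i}}{2t}\Bigl(x-\frac{t\xi}{\sum_i\tm_{k_i}}\Bigr)^{2}+\frac{t\xi^{2}}{2}\Bigl(\frac{1}{m_j}-\frac{1}{\sum_i\tm_{k_i}}\Bigr).
\end{equation*}
Condition (a) eliminates all non-resonant triples, leaving only those with $\sum_i\tm_{k_i}=m_j$. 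Stationary phase at $x_*=t\xi/m_j$ on this resonant sum extracts a main contribution of the form $\tfrac{c_j}{t}\,p_j(\xi;Y(t,\xi))$ for a vector $Y(t,\xi)\in\C^N$ whose components involve the values $\sh{\alpha}_k(t,\tm_k\xi/m_j)$; since $p\equiv 0$ as a polynomial on $\R\times\C^N$ by (b$_3$), this main term vanishes identically. What remains is $i\pa_t\alpha_j(t,\xi)=R_j(t,\xi)$, where $R_j$ collects the stationary-phase correction, the factorization remainders $r_k$, and the $O(1/t)$ lower-order pieces of $\pa_x\sh{u}_{k_i}$; a careful accounting will give $\|R_j(t)\|_{L^2_\xi\cap L^\infty_\xi}\le C\eps^{3}(1+t)^{-5/4+\delta'}$ for some small $\delta'>0$.

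Because $(1+t)^{-5/4+\delta'}$ is integrable on $[1,\infty)$, the family $\{\alpha_j(t,\cdot)\}_{t\ge 1}$ is Cauchy in $L^2_\xi\cap L^\infty_\xi$; its limit $\hat\varphi_j^+$ then satisfies $\varphi_j^+\in L^2(\R)$, $\hat\varphi_j^+\in L^\infty(\R)$, and $\|\alpha_j(t)-\hat\varphi_j^+\|_{L^2_\xi\cap L^\infty_\xi}\le Ct^{-1/4+\delta'}$. Since $\mathcal{U}_{m_j}(-t)$ is an $L^2$-isometry, the first assertion follows immediately from $\|u_j(t)-\mathcal{U}_{m_j}(t)\varphi_j^+\|_{L^2}=\|\alpha_j(t)-\hat\varphi_j^+\|_{L^2}=O(t^{-1/4+\delta'})$. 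For the second, substituting $\hat\varphi_j^+$ in place of $\alpha_j$ in the factorization of $u_j$ gives an $L^\infty$ error bounded by $Ct^{-1/2}\|\alpha_j(t)-\hat\varphi_j^+\|_{L^\infty}+\|r_j(t)\|_{L^\infty}=O(t^{-3/4+\delta'})$. The main technical obstacle is the reduction step in the second paragraph: one has to track carefully how the three factorization remainders $r_{k_i}$, the $O(1/t)$ lower-order pieces of $\pa_x\sh{u}_{k_i}$, and the slowly growing $(1+t)^\delta$ factor coming from the $J$-weighted bounds combine so that $R_j$ is strictly integrable in $t$, which is precisely what permits the small $\delta$-loss in the final rate.
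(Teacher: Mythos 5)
Your proposal follows essentially the same route as the paper: both reduce matters to the profile equation $i\pa_t\alpha_j=\tfrac1t p_j(\xi;\alpha)+\rho_j$ with $\|\rho_j(t)\|_{L^2_\xi\cap L^\infty_\xi}=O(\eps^2 t^{-5/4+\gamma})$ (this is Proposition \ref{prp_main} combined with the a priori bounds of Section \ref{sec_apriori}; you re-derive the main-term extraction by stationary phase, which is the same computation packaged as the factorization $\mathcal{U}_m=\mathcal{M}_m\mathcal{D}\mathcal{W}_m\mathcal{F}_m$ and the bound on $\mathcal{W}_m^{\pm1}-1$ in the paper), observe that (b$_3$) annihilates the main term, and integrate $\rho_j$ to infinity to define $\alpha_j^+=\mathcal{F}_{m_j}\varphi_j^+$. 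Your concluding steps --- unitarity of $e^{i\frac{t}{2m_j}\pa_x^2}$ for the $L^2$ statement and the $\mathcal{M}\mathcal{D}\mathcal{F}$ asymptotic (Lemma \ref{lemma_asympt}) for the $L^\infty$ statement --- coincide with the paper's.
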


\begin{rmk}
In view of the proof of Theorem \ref{thm_asymp_free} below, 
we can see that $\varphi^+=(\varphi_j^+)_{j \in I_N}$ does not 
identically vanish if the initial data $\varphi$ 
is suitably small and does not identically vanish (see Remark \ref{rmk_opt} 
for the detail). 
Therefore the solution does not gain an additional logarithmic decay 
under the conditions (a) and (b$_3$). \\
\end{rmk}

Now let us give several examples which satisfy the above
mentioned conditions: 

\begin{ex} In the single case (i.e., $N=1$), we may assume $m_1=1$ without 
loss of generality. Then we can check that the condition (a) is euqivalent to 
the gauge invariance \eqref{gi}, and that the condition \eqref{dissip_single} 
is equivalent to the condition (b$_1$). Therefore our results above can be 
viewed as an extension of \cite{HNS} except the explicit asymptotic profile of 
the solution. We can also see that our results cover the system 
\eqref{nls_two} under the assumptions \eqref{mass_resonance}, \eqref{dissip}, 
\eqref{interaction}. Indeed, \eqref{mass_resonance} plays the role of (a), 
and \eqref{dissip}, \eqref{interaction} correspond to (b$_1$) with 
$A=\begin{pmatrix} \kappa_1 & 0\\ 0& \kappa_2\end{pmatrix}$. 
\end{ex}

\begin{ex}
Next let us consider the following two-component system
\[
 \left\{\begin{array}{l}
 \mathcal{L}_{m} u_1 = \lambda_{1} |u_1|^2u_1  
 + \lambda_{2} \overline{u_1} (\pa_x u_1)^2 
 + i  u_2 \pa_x (\overline{u_1}^2),\\
 \mathcal{L}_{3m} u_2 
 = \lambda_3 |u_2|^2 \pa_x u_2 -i(|u_2|^2 +|\pa_x u_2|^2) u_2 
  -i u_1^2 \pa_x u_1
 \end{array}\right.
\]
with $\lambda_1$, $\lambda_2$, $\lambda_3 \in \C$ and 
$m \in \R\backslash \{0\}$, which is a bit more complicated than 
\eqref{nls_two}. 
It is easy to check that the condition (a) is satisfied by this 
system. Also it follows from simple calculations that 
\[
 \left\{\begin{array}{l}
 p_1(\xi;Y) 
 = (\lambda_{1} -\lambda_{2} m^2 \xi^2) |Y_1|^2 Y_1 
   + 2m \xi \overline{Y_1}^2 Y_2,
 \\
 p_2(\xi;Y) = i(3\lambda_3 m \xi  -1- 9  m^2 \xi^2) |Y_2|^2 Y_2 
 + 3m \xi Y_1^3.
 \end{array}\right.
\]
With $A=\begin{pmatrix} 3 & 0 \\ 0 & 2\end{pmatrix}$, we have 
\[
 \jb{p(\xi;Y), AY}_{\C^2}
 =
 3(\lambda_{1} - \lambda_{2} m^2 \xi^2) |Y_1|^4
 - 2i(1 - 3\lambda_3 m \xi  + 9  m^2 \xi^2) |Y_2|^4
 + 12m \xi \realpart(\overline{Y_1}^3 Y_2),
\]
whence
\[
 \imagpart \jb{p(\xi;Y), AY}_{\C^2}
 =
3\bigl( \imagpart\lambda_{1} - \imagpart \lambda_{2} m^2 \xi^2 \bigr) 
 |Y_1|^4
 - \left\{ 2 - \frac{(\realpart \lambda_3)^2}{2}  
 + 2\biggl(3m\xi -\frac{\realpart \lambda_3 }{2} \biggr)^2 \right\} 
 |Y_2|^4.
\]
Therefore we see that 
\begin{itemize}
\item
(b$_0$) is satisfied if $\imagpart \lambda_1\le 0$, 
$\imagpart \lambda_2\ge 0$ and $|\realpart \lambda_3|\le 2$. 

\item
(b$_1$) is satisfied if $\imagpart \lambda_1 < 0$, $\imagpart \lambda_2\ge 0$ 
and $|\realpart \lambda_3|< 2$. 

\item
(b$_2$) is satisfied if $\imagpart \lambda_1 < 0$, $\imagpart \lambda_2> 0$ 
and $|\realpart \lambda_3|< 2$. 
\end{itemize}
\end{ex}

\begin{ex} Finally we focus on the three-component system 
\[
 \left\{\begin{array}{l}
 \mathcal{L}_{m} u_1 = u_2 \pa_x \bigl( \overline{u_1} u_2\bigr),\\
 \mathcal{L}_{m} u_2 = \overline{u_1}  \overline{u_2}\pa_x u_3 
             + 3 \overline{u_1} u_3 \pa_x \overline{u_2},\\
 \mathcal{L}_{3m} u_3 = 2u_1^2 \pa_x u_2 - u_2\pa_x (u_1^2).
 \end{array}\right.
\]
We can immediately check that this system satisfies (a) and (b$_3$). 
Note that this example should be compared with \cite{IKS}, 
where the null structure in quadratic derivative nonlinear Schr\"odinger 
systems in $\R^2$ is considered in details 
(see also \cite{KatTsu}, \cite{KawSu}, \cite{Su}).\\ 
\end{ex}

The rest part of this paper is organized as follows: 
The next section is devoted to preliminaries on basic properties of 
the operator $J_m$. In Scetion \ref{sec_smoothing}, we recall the smoothing 
property of the linear Schr\"odinger eqautions. In Section \ref{sec_apriori}, 
we will get an a priori estimate. After that, The main theorems will be 
proved in Section \ref{sec_pf_of_thm}. 
The appendix is devoted to the proof of technical lemmas. 
In what follows, we will denote several positive constants by the same letter 
$C$, which is possibly different from line to line.

\section{Preliminaries}  \label{sec_prelim}
In this section, we collect several identities and inequalities 
which are useful for our purpose. We set 
$J_m=x+i\frac{t}{m}\pa_x$ for non-zero real constant $m$. 
Then we can check that $[\pa_x,J_m]=1$ and $[\mathcal{L}_m,J_m]=0$, 
where $[\cdot,\cdot]$ denotes the commutator  of two linear operators. 
We also note that 
\begin{align}
 J_m \phi
 = 
 \frac{it}{m}e^{im\frac{x^2}{2t}} \pa_x \bigl(e^{-im\frac{x^2}{2t}} \phi \bigr),
 \label{rel_j}
\end{align}
which yields the following useful lemmas.
\begin{lem}\label{lemma_J1}
 Let $m$, $\mu_1$, $\mu_2$, $\mu_3$ be non-zero real constants
satisfying $m = \mu_1 + \mu_2 + \mu_3$. We have
\[
 J_m(f_1f_2f_3) = \frac{\mu_1}{m}(J_{\mu_1}f_1)f_2f_3
                 + \frac{\mu_2}{m}f_1(J_{\mu_2}f_2)f_3
                 + \frac{\mu_3}{m}f_1f_2(J_{\mu_3}f_3),
\]
\[
 J_m(f_1f_2 \overline{f_3}) = \frac{\mu_1}{m}(J_{\mu_1}f_1)f_2\overline{f_3}
                 + \frac{\mu_2}{m}f_1(J_{\mu_2}f_2)\overline{f_3}
                 + \frac{\mu_3}{m}f_1f_2(\overline{J_{-\mu_3}f_3}),
\]
\[
 J_m(f_1 \overline{f_2} \overline{f_3}) 
 = \frac{\mu_1}{m}(J_{\mu_1}f_1) \overline{f_2} \overline{f_3}
   + \frac{\mu_2}{m}f_1 (\overline{J_{-\mu_2}f_2}) \overline{f_3}
   + \frac{\mu_3}{m}f_1 \overline{f_2} (\overline{J_{-\mu_3}f_3}),
\]
\[
 J_m(\overline{f_1} \overline{f_2} \overline{f_3}) 
 =  
  \frac{\mu_1}{m}(\overline{J_{-\mu_1}f_1}) \overline{f_2} \overline{f_3}
 +
  \frac{\mu_2}{m}\overline{f_1} (\overline{J_{-\mu_2}f_2}) \overline{f_3}
 +
  \frac{\mu_3}{m}\overline{f_1} \overline{f_2} (\overline{J_{-\mu_3}f_3})
\]
for smooth $\C$-valued functions $f_1$, $f_2$ and $f_3$. 
\end{lem}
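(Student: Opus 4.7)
The plan is to derive all four identities in a uniform way from the single factorization formula \eqref{rel_j}. The key algebraic input is that the mass-balance assumption $m = \mu_1 + \mu_2 + \mu_3$ lets us split
\[
e^{-im\frac{x^2}{2t}}
= e^{-i\mu_1 \frac{x^2}{2t}} e^{-i\mu_2 \frac{x^2}{2t}} e^{-i\mu_3 \frac{x^2}{2t}},
\]
so that each exponential can be grouped with its corresponding factor (conjugated or not) inside the triple product. Applying $\pa_x$ by the Leibniz rule produces three terms; multiplying back by $\frac{it}{m} e^{im x^2/(2t)}$ and using $m - \mu_1 - \mu_2 - \mu_3 = 0$ collapses the residual overall phase to $1$, so each summand can be recognised via \eqref{rel_j} as a $J$-derivative acting on the unique factor that was differentiated.

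For the first identity, which has no conjugates, this recognition is direct and yields the coefficient $\frac{\mu_k}{m}$ in front of each term. For the remaining three identities, one handles a conjugated slot via the simple rewriting
\[
e^{-i\mu_k \frac{x^2}{2t}} \overline{f_k}
= \overline{e^{-i(-\mu_k)\frac{x^2}{2t}} f_k},
\]
which lets the conjugate bar pass through $\pa_x$. The factor produced by that slot is then $\overline{J_{-\mu_k} f_k}$ rather than $J_{\mu_k} f_k$. The apparent sign flip hidden in $J_{-\mu_k}\phi = \tfrac{it}{-\mu_k}\,e^{-i\mu_k x^2/(2t)}\,\pa_x\bigl(e^{i\mu_k x^2/(2t)}\phi\bigr)$ is exactly cancelled upon taking the complex conjugate, using $\overline{\tfrac{it}{-\mu_k}} = \tfrac{it}{\mu_k}$, so the overall coefficient in front of each term is still $+\frac{\mu_k}{m}$ regardless of which slots carry conjugates.

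The computation itself is entirely elementary; the only real work is sign bookkeeping. Accordingly I would carry out the first identity in full, and then indicate that the other three follow slot by slot by replacing $\mu_k$ with $-\mu_k$ in the $J$-operator whenever $f_k$ is replaced by $\overline{f_k}$. The main (mild) obstacle is checking that the compensating signs line up correctly when two or three conjugates appear simultaneously, but this reduces to applying the rewriting above independently in each conjugated slot, so no new phenomenon arises.
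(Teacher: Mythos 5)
Your proposal is correct and is essentially the paper's own proof: both start from the factorization \eqref{rel_j}, use $m=\mu_1+\mu_2+\mu_3$ to distribute the phase $e^{-im x^2/(2t)}$ over the three slots (writing $e^{-i\mu_k x^2/(2t)}\overline{f_k}=\overline{e^{i\mu_k x^2/(2t)}f_k}$ in conjugated slots), apply the Leibniz rule, and recognize each resulting term as $\frac{\mu_k}{m}J_{\mu_k}f_k$ or $\frac{\mu_k}{m}\overline{J_{-\mu_k}f_k}$; your sign bookkeeping, including $\overline{it/(-\mu_k)}=it/\mu_k$, checks out. The only cosmetic difference is that the paper carries out the second identity in detail rather than the first.
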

\begin{proof} 
We set $\theta=x^2/(2t)$. It follows from \eqref{rel_j} that 
\begin{align*}
 mJ_m(f_1f_2 \overline{f_3}) 
 &=
 it e^{i(\mu_1+\mu_2+\mu_3)\theta} \pa_x \Bigl\{
  (e^{-i\mu_1\theta} f_1)
  (e^{-i\mu_2\theta} f_2)( \overline{e^{i\mu_3\theta} f_3})
 \Bigr\}\\
 &=
 \Bigl( it e^{i\mu_1\theta} \pa_x 
  (e^{-i\mu_1\theta} f_1)\Bigr)
  f_2 \overline{f_3}
 +
  f_1 
 \Bigl( it e^{i\mu_2\theta}\pa_x  (e^{-i\mu_2\theta} f_2)\Bigr)
  \overline{f_3}
 -
 f_1 f_2
 \Bigl(\overline{it e^{-i\mu_3\theta} \pa_x ( e^{i\mu_3\theta} f_3)}\Bigr)
 \\
 &=
 (\mu_1 J_{\mu_1}f_1)f_2\overline{f_3}
 + 
 f_1(\mu_2 J_{\mu_2}f_2)\overline{f_3}
 +
 f_1f_2(\overline{\mu_3 J_{-\mu_3}f_3}),
\end{align*}
which gives the second identity. 
The other three identities can be shown in the same way.\\
\end{proof}

\begin{rmk}
 If we do not assume $m=\mu_1+\mu_2+\mu_3$, we have 
 \begin{align*}  
 J_m(f_1f_2f_3) 
 =& \frac{\mu_1}{\mu_1+\mu_2+\mu_3}(J_{\mu_1}f_1)f_2f_3
  + \frac{\mu_2}{\mu_1+\mu_2+\mu_3}f_1(J_{\mu_2}f_2)f_3
  + \frac{\mu_3}{\mu_1+\mu_2+\mu_3}f_1f_2(J_{\mu_3}f_3)\\
 &+ it
    \left(\frac{1}{m} -\frac{1}{\mu_1+\mu_2+\mu_3} \right)
    \pa_x (f_1f_2f_3), 
\end{align*}
and so on. The last term implies a loss of time-decay in general. 
(The situation is worse if $\mu_1+\mu_2+\mu_3=0$.) \\
\end{rmk}

\begin{lem} \label{lemma_J2}
Let $m$, $\mu_1$, $\mu_2$ be non-zero real constants. We have 
\begin{align}
 \pa_x(f_1 f_2 f_3) =\frac{m}{\mu_1}(\pa_x f_1) f_2 f_3
 +\frac{R_1}{t}
 \label{key1}
\end{align}
and 
\begin{align}
 \pa_x^2(f_1 f_2 f_3) 
 =
  \frac{m^2}{\mu_1 \mu_2} (\pa_x f_1)(\pa_x f_2) f_3+ \frac{R_2}{t},
 \label{key2}
\end{align}
where 
$R_1=-im J_m(f_1 f_2 f_3) +im(J_{\mu_1}f_1) f_2 f_3$ and
\[
 R_2=
 -\frac{im^2}{\mu_1} J_{m}\bigl[(\pa_x f_1) f_2 f_3\bigr] 
  +
 \frac{im^2}{\mu_1}(\pa_x f_1)(J_{\mu_2}f_2)f_3
 + 
 \pa_x R_1.
\]
\end{lem}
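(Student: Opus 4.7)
The key identity here is that, from the definition $J_m=x+i\frac{t}{m}\pa_x$, one can solve for the derivative as
$$
\pa_x \phi = -\frac{im}{t}\bigl(J_m \phi - x\phi\bigr),
$$
valid for any smooth $\phi$ and any nonzero real $m$. To prove \eqref{key1}, I would apply this identity to $\phi = f_1 f_2 f_3$ with parameter $m$, and separately to $\phi = f_1$ with parameter $\mu_1$; the latter rearranges to $x f_1 = J_{\mu_1}f_1 - i\frac{t}{\mu_1}\pa_x f_1$. Multiplying the second relation by $f_2 f_3$, substituting the resulting expression for $x f_1 f_2 f_3$ into the first, and collecting terms directly yields \eqref{key1} with the claimed $R_1$.

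For \eqref{key2}, I would simply differentiate \eqref{key1} once more in $x$. The derivative of the remainder $R_1/t$ contributes exactly the term $\pa_x R_1/t$ appearing in $R_2$. The remaining piece $\frac{m}{\mu_1}\pa_x\bigl[(\pa_x f_1)f_2 f_3\bigr]$ can be handled by a second application of \eqref{key1}, now applied to the triple product $(\pa_x f_1)\cdot f_2\cdot f_3$ but with the roles rearranged so that $f_2$ (with parameter $\mu_2$) plays the role formerly played by $f_1$ (with $\mu_1$); this is legitimate because the formula \eqref{key1} is symmetric under permutations of the three factors and under the choice of which nonzero constant one calls $\mu_1$. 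The principal term then becomes $\frac{m^2}{\mu_1 \mu_2}(\pa_x f_1)(\pa_x f_2)f_3$, while the resulting remainder combines with $\pa_x R_1/t$ to produce precisely the stated $R_2/t$.

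There is no genuine analytical obstacle; the whole lemma is a bookkeeping exercise encoding the commutator relation $[\pa_x,J_m]=1$. The only point requiring attention is careful sign- and coefficient-tracking — in particular, the asymmetric appearance of $\mu_1$ and $\mu_2$ in $R_2$ reflects the order in which the two successive applications of \eqref{key1} are performed, and one must remember that the formula is invariant under relabeling of the three factors so that the second application may extract a derivative on $f_2$ rather than on $\pa_x f_1$.
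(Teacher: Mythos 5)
Your proof is correct and follows essentially the same route as the paper: the identity $\frac{1}{m}\pa_x - \frac{1}{it}J_m = i\frac{x}{t}$ applied to both the triple product and the single factor $f_1$ gives \eqref{key1}, and \eqref{key2} is obtained by applying \eqref{key1} twice, the second time with the distinguished factor being $f_2$ and constant $\mu_2$. The coefficient bookkeeping you describe does reproduce the stated $R_2$ exactly.
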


\begin{rmk}
We do not assume any relations among $\mu_1$, $\mu_2$ and $m$ 
in Lemma \ref{lemma_J2}.
\end{rmk}

\begin{proof} 
From the relation $\frac{1}{m}\pa_x -\frac{1}{it}J_m=i\frac{x}{t}$, 
we see that
\begin{align*}
\frac{1}{m}\pa_x(f_1 f_2 f_3 ) - \frac{1}{it}J_m(f_1 f_2 f_3)
=
i\frac{x}{t} f_1 f_2 f_3
=
\left(\frac{1}{\mu_1}\pa_x f_1 -\frac{1}{it}J_{\mu_1} f_1\right) f_2 f_3,
\end{align*}
which yields \eqref{key1}. 
We also have \eqref{key2} by using \eqref{key1} twice.
\end{proof}

Next we set 
\[
 \bigl( \mathcal{U}_m(t) \phi \bigr)(x)
 :=
 e^{i\frac{t}{2m}\pa_x^2}\phi(x)
 =
 \sqrt{\frac{|m|}{2\pi t}}e^{-i \frac{\pi}{4} \mathrm{sgn}(m)} 
 \int_{\R} e^{im\frac{(x-y)^2}{2t}} \phi(y)dy
\]
for $m \in \R\backslash \{0\}$ and $t>0$. 
We also introduce the scaled Fourier transform $\mathcal{F}_m$ by 
\[
 \bigl( \mathcal{F}_m \phi \bigr)(\xi)
 :=
 |m|^{1/2} e^{-i \frac{\pi}{4} \mathrm{sgn}(m)}\, \hat{\phi}(m\xi)
 =
 \sqrt{\frac{|m|}{2\pi}} e^{-i \frac{\pi}{4} \mathrm{sgn}(m)} 
 \int_{\R} e^{-imy\xi} \phi(y)dy,
\]
as well as auxiliary operators 
\[
 \bigl( \mathcal{M}_m(t) \phi \bigr)(x)
 :=
 e^{im\frac{x^2}{2t}}\phi(x),
 \quad
 \bigl( \mathcal{D}(t)\phi \bigr)(x) 
 := 
 \frac{1}{\sqrt{t}} \phi \left(\frac{x}{t} \right),
 \quad 
  \mathcal{W}_m(t)  \phi 
 :=
 \mathcal{F}_m \mathcal{M}_m (t) \mathcal{F}_m^{-1} \phi,
\]
so that $\mathcal{U}_m$ can be decomposed into 
$\mathcal{U}_m=\mathcal{M}_m \mathcal{D} \mathcal{F}_m \mathcal{M}_m
=\mathcal{M}_m \mathcal{D} \mathcal{W}_m \mathcal{F}_m$. 
The following lemma is well known (see e.g., \cite{HN}, \cite{OS}). 

\begin{lem} \label{lemma_asympt}
Let $m$ be a non-zero real constant. We have 
\begin{align*}
 \|
  \phi - \mathcal{M}_m \mathcal{D} \mathcal{F}_m \mathcal{U}_m^{-1}\phi
 \|_{L^{\infty}}
 \le
 Ct^{-3/4} \bigl(\|\phi\|_{L^2} + \|\mathcal{J}_m \phi\|_{L^2} \bigr)
\end{align*}
and
\[
 \|\phi\|_{L^{\infty}} 
 \le 
 t^{-1/2}\|\mathcal{F}_m \mathcal{U}_m^{-1}\phi\|_{L^{\infty}}
 +Ct^{-3/4} (\|\phi\|_{L^2} + \|\mathcal{J}_m \phi\|_{L^2})
\]
for $t\ge 1$. 
\end{lem}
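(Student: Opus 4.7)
The plan is to exploit the factorization $\mathcal{U}_m = \mathcal{M}_m \mathcal{D} \mathcal{F}_m \mathcal{M}_m$ introduced just before the lemma. Setting $\psi := \mathcal{U}_m^{-1}\phi$, one has the algebraic identity
\[
 \phi - \mathcal{M}_m \mathcal{D} \mathcal{F}_m \mathcal{U}_m^{-1}\phi
 =
 \mathcal{M}_m \mathcal{D} \mathcal{F}_m (\mathcal{M}_m - 1)\psi,
\]
and since $|\mathcal{M}_m| \equiv 1$ and $\|\mathcal{D} g\|_{L^{\infty}} = t^{-1/2} \|g\|_{L^{\infty}}$, the first inequality will follow once I establish
\[
 \|\mathcal{F}_m (\mathcal{M}_m - 1)\psi\|_{L^{\infty}}
 \le
 C t^{-1/4} \|y\psi\|_{L^2}.
\]
To translate the right-hand side into an estimate in terms of $\phi$ and $J_m \phi$, I would use the commutation relation $J_m \mathcal{U}_m = \mathcal{U}_m \cdot y$ (a consequence of $[\mathcal{L}_m, J_m] = 0$ together with $J_m|_{t=0} = y$) and the fact that $\mathcal{U}_m$ is an $L^2$-isometry, which give $\|y\psi\|_{L^2} = \|J_m \phi\|_{L^2}$.

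The key displayed estimate will be produced by a short interpolation. The pointwise bound $|e^{imy^2/(2t)} - 1| \le C \min(y^2/t, 1)$, combined with $\min(a,1) \le \sqrt{a}$, yields at once $|(\mathcal{M}_m - 1)\psi(y)| \le C (|y|/\sqrt{t})|\psi(y)|$, hence $\|(\mathcal{M}_m - 1)\psi\|_{L^2} \le C t^{-1/2} \|y\psi\|_{L^2}$, while the crude bound $|\mathcal{M}_m - 1| \le 2$ gives $\|y(\mathcal{M}_m - 1)\psi\|_{L^2} \le 2 \|y\psi\|_{L^2}$. Putting $g := (\mathcal{M}_m - 1)\psi$ and combining the Hausdorff--Young inequality $\|\mathcal{F}_m g\|_{L^{\infty}} \le C \|g\|_{L^1}$ with the one-dimensional interpolation $\|g\|_{L^1} \le C \|g\|_{L^2}^{1/2} \|yg\|_{L^2}^{1/2}$ (itself obtained by splitting the integral at $|y|=R$ and optimizing in $R$) produces exactly the claimed $t^{-1/4}$ gain.

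The second inequality will follow from the triangle inequality
\[
 \|\phi\|_{L^{\infty}}
 \le
 \|\mathcal{M}_m \mathcal{D} \mathcal{F}_m \mathcal{U}_m^{-1}\phi\|_{L^{\infty}}
 +
 \|\phi - \mathcal{M}_m \mathcal{D} \mathcal{F}_m \mathcal{U}_m^{-1}\phi\|_{L^{\infty}},
\]
together with the observation that the first term on the right equals $t^{-1/2} \|\mathcal{F}_m \mathcal{U}_m^{-1}\phi\|_{L^{\infty}}$, because $|\mathcal{M}_m| \equiv 1$ and $\mathcal{D}$ scales $L^\infty$-norms by $t^{-1/2}$, while the second term has just been controlled by the first inequality. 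The only real subtlety is arranging the interpolation so that the $L^2$-loss carries exactly $t^{-1/2}$ and the $L^2$-weight loss stays of order one; any other balance would degrade the final $t^{-3/4}$ decay rate that Lemma~\ref{lemma_asympt} is designed to deliver as input for the subsequent a priori estimates.
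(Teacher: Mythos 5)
Your proof is correct and follows essentially the same route as the paper: both rest on the factorization of $\mathcal{U}_m$, the elementary bound $|e^{i\theta}-1|\le C|\theta|^{1/2}$, and an interpolation that produces the extra $t^{-1/4}$ gain on top of the $t^{-1/2}$ from $\mathcal{D}$. The only (Plancherel-equivalent) difference is that you estimate $\mathcal{F}_m(\mathcal{M}_m-1)\psi$ directly via Hausdorff--Young and an $L^1$--$L^2$ splitting in the physical variable, whereas the paper conjugates through $\mathcal{F}_m$ and bounds $(\mathcal{W}_m-1)$ in $L^\infty$ using $\|f\|_{L^\infty}\le\sqrt{2}\|f\|_{L^2}^{1/2}\|\partial_x f\|_{L^2}^{1/2}$.
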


\begin{proof}
For the convenience  of the readers, we give the proof. 
By the relation $J_m=\mathcal{U}_m x \mathcal{U}_m^{-1}$, we see that 
\[
 \|\mathcal{F}_m \mathcal{U}_m^{-1}\phi\|_{H^1}
 \le 
 C \| \mathcal{U}_m^{-1}\phi\|_{H^{0,1}} 
 \le 
  C (\|\phi\|_{L^2} + \|\mathcal{J}_m \phi\|_{L^2}).
\]
Also it follows from the inequalities 
$
\|\phi\|_{L^{\infty}}\le \sqrt{2}\|\phi\|_{L^2}^{1/2}\|\pa_x \phi\|_{L^2}^{1/2}
$
and 
$|e^{i\theta}-1|\le C|\theta|^{1/2}$ that 
\begin{align}
 \|(\mathcal{W}_m^{\pm 1}-1)\phi\|_{L^{\infty}} 
 &\le 
 C \|(\mathcal{M}_m^{\pm 1}-1)\mathcal{F}^{-1}_m \phi\|_{L^2}^{1/2}\, 
  \|\pa_x (\mathcal{W}_m^{\pm 1}-1) \phi\|_{L^2}^{1/2}
 \nonumber\\
 &\le
 C(t^{-1/2} \|\mathcal{F}^{-1}_m \phi\|_{H^{0,1}})^{1/2}\, 
  \|\pa_x \phi\|_{L^2}^{1/2}
 \nonumber\\
 &\le 
 C t^{-1/4}\|\phi\|_{H^1}.
 \label{est_w}
\end{align}
Combining with the inequalities obtained above, we have 
\begin{align*}
 \|
  \phi - \mathcal{M}_m \mathcal{D} \mathcal{F}_m \mathcal{U}_m^{-1}\phi
 \|_{L^{\infty}}
 &= 
 \|
  \mathcal{M}_m \mathcal{D} (\mathcal{W}_m - 1) 
  \mathcal{F}_m \mathcal{U}_m^{-1}\phi
 \|_{L^{\infty}}\\
 &\le
 t^{-1/2} 
 \|(\mathcal{W}_m - 1) \mathcal{F}_m \mathcal{U}_m^{-1}\phi\|_{L^{\infty}}
 \\
 &\le
 Ct^{-3/4} \|\mathcal{F}_m \mathcal{U}_m^{-1}\phi\|_{H^1}\\
 &\le
 Ct^{-3/4} \bigl(\|\phi\|_{L^2} + \|\mathcal{J}_m \phi\|_{L^2} \bigr).
\end{align*}
Using the result derived above, we also have
\begin{align*}
 \|\phi\|_{L^{\infty}}
 &\le 
  \|
  \mathcal{M}_m \mathcal{D} \mathcal{F}_m \mathcal{U}_m^{-1}\phi
 \|_{L^{\infty}}
 +
 \|
  \phi - \mathcal{M}_m \mathcal{D} \mathcal{F}_m \mathcal{U}_m^{-1}\phi
 \|_{L^{\infty}}\\
 &\le 
 t^{-1/2} \|\mathcal{F}_m \mathcal{U}_m^{-1}\phi \|_{L^{\infty}}
 +
 Ct^{-3/4} \bigl(\|\phi\|_{L^2} + \|\mathcal{J}_m \phi\|_{L^2} \bigr).
\end{align*}
\end{proof}

\begin{lem} \label{lemma_prod}
Let $m$ be a non-zero real constant. We have
\[
\|\mathcal{F}_m \mathcal{U}_m^{-1} (f_1 f_2 f_3)\|_{L^{\infty}}
\le 
 C 
 \|f_1\|_{L^2} \|f_2 \|_{L^2} \|f_3\|_{L^{\infty}}.
\]
\end{lem}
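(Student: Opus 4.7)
The plan is to reduce the bound to a Hausdorff--Young / Hölder argument by passing the composition $\mathcal{F}_m \mathcal{U}_m^{-1}$ through the Fourier transform.

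First, I would compute $\mathcal{F}_m \mathcal{U}_m^{-1}$ explicitly. Since $\mathcal{U}_m(t) = e^{i\frac{t}{2m}\pa_x^2}$ is a Fourier multiplier with symbol $e^{-i\frac{t}{2m}\xi^2}$, applying the definition of $\mathcal{F}_m$ gives
\[
 (\mathcal{F}_m \mathcal{U}_m^{-1}\phi)(\xi)
 =
 |m|^{1/2} e^{-i\frac{\pi}{4}\mathrm{sgn}(m)}\, e^{i\frac{tm}{2}\xi^2}\,
 \hat\phi(m\xi).
\]
Since the phase factors have modulus one, this yields the pointwise inequality $|(\mathcal{F}_m \mathcal{U}_m^{-1}\phi)(\xi)| = |m|^{1/2} |\hat\phi(m\xi)|$, and hence
\[
 \|\mathcal{F}_m \mathcal{U}_m^{-1}\phi\|_{L^{\infty}}
 \le
 \frac{|m|^{1/2}}{\sqrt{2\pi}} \|\phi\|_{L^1}
\]
by the trivial $L^1\to L^\infty$ bound for the Fourier transform.

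Next, I would apply this with $\phi = f_1 f_2 f_3$ and estimate $\|f_1 f_2 f_3\|_{L^1}$ by Hölder's inequality with exponents $(2,2,\infty)$:
\[
 \|f_1 f_2 f_3\|_{L^1}
 \le
 \|f_1\|_{L^2} \|f_2\|_{L^2} \|f_3\|_{L^{\infty}}.
\]
Combining these two estimates yields the claim with $C = |m|^{1/2}/\sqrt{2\pi}$.

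There is essentially no obstacle here: the only small subtlety is recognizing that the $L^\infty$ norm after $\mathcal{F}_m \mathcal{U}_m^{-1}$ collapses, through the unit-modulus Schrödinger symbol, to a pure Fourier $L^\infty$-norm of the product, at which point Hausdorff--Young (the trivial endpoint) together with Hölder closes the argument.
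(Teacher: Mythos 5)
Your proof is correct and rests on the same two ingredients as the paper's: the trivial $L^1\to L^{\infty}$ bound for the Fourier transform (after observing that $\mathcal{F}_m\mathcal{U}_m^{-1}$ differs from $\phi\mapsto|m|^{1/2}\hat\phi(m\,\cdot)$ only by unimodular factors) and H\"older with exponents $(2,2,\infty)$. The only cosmetic difference is that the paper routes the first step through the factorization $\mathcal{F}_m\mathcal{U}_m^{-1}=\mathcal{W}_m^{-1}\mathcal{D}^{-1}\mathcal{M}_m^{-1}$ and cancels the resulting powers of $t$, whereas you compute the composite symbol directly; both are equivalent.
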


\begin{proof}
By the relation 
$\mathcal{F}_m \mathcal{U}_m^{-1}
 =\mathcal{W}_m^{-1} \mathcal{D}^{-1} \mathcal{M}_m^{-1}$
and the estimate 
$\|\mathcal{W}_m^{-1} \phi \|_{L^{\infty}} \le Ct^{1/2}\|\phi\|_{L^1}$, 
we have 
\begin{align*}
\|\mathcal{F}_m \mathcal{U}_m^{-1} (f_1 f_2 f_3)\|_{L^{\infty}}
&\le 
Ct^{1/2} \| \mathcal{D}^{-1} \mathcal{M}_m^{-1}(f_1f_2f_3) \|_{L^1}\\
&\le
C t^{1/2} \cdot t^{-1} 
\|(\mathcal{D}^{-1} f_1)(\mathcal{D}^{-1} f_2)
(\mathcal{D}^{-1} \mathcal{M}_m^{-1}f_3)\|_{L^1}\\
&\le
 C t^{-1/2} \|\mathcal{D}^{-1} f_1\|_{L^2}
 \|\mathcal{D}^{-1} f_2\|_{L^2} 
 \|\mathcal{D}^{-1} \mathcal{M}_m^{-1}f_3\|_{L^{\infty}}\\
&=
 C t^{-1/2} \|f_1\|_{L^2} \|f_2\|_{L^2} 
 \cdot t^{1/2} \|f_3\|_{L^{\infty}}\\
&= 
  C \|f_1\|_{L^2} \|f_2\|_{L^2} \|f_3\|_{L^{\infty}}.
\end{align*}
\end{proof}

We deduce the following proposition from 
Lemmas \ref{lemma_J1}--\ref{lemma_prod}, 
which will play the key role in Section~\ref{subsec_pointwise}.

\begin{prp} \label{prp_main}
Suppose that the condition (a) is satisfied. For 
a $\C^N$-valued function $u=(u_j(t,x))_{j\in I_N}$, we set 
$
 \alpha_j(t,\xi)=\mathcal{F}_{m_j}[\mathcal{U}_{m_j}(t)^{-1} u_j(t,\cdot)](\xi)
$ and $\alpha=(\alpha_j(t,\xi))_{j \in I_N}$. 
Then we have
\[
 \left\|
 \mathcal{F}_{m_j} \mathcal{U}_{m_j}^{-1} \Bigl[\pa_x^l F_j(u,\pa_x u) \Bigr]
 -
 \frac{(im_j \xi )^l}{t} p_j (\xi; \alpha)
 \right\|_{L^{\infty}_{\xi}}
 \le \frac{C}{t^{5/4}} 
 \sum_{k=1}^N 
 \bigl( 
    \|u_k(t)\|_{H^3} + \|J_{m_k} u_k(t)\|_{H^2} 
 \bigr)^3
\]
for $j \in I_{N}$, $l\in \{0,1,2\}$ and $t\ge 1$.
\end{prp}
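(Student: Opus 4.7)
My plan is to reduce the estimate to a cubic-product building block via Leibniz expansion, and then prove the building block by combining the WKB asymptotic from Lemma \ref{lemma_asympt} with the smoothing estimate \eqref{est_w} for $\mathcal{W}_{m_j}^{-1}-I$. Writing $F_j=\sum C_{j,k_1k_2k_3}^{l_1l_2l_3}\prod_{a=1}^3\pa_x^{l_a}\sh{u}_{k_a}$ and expanding $\pa_x^l$ by Leibniz, condition (a) together with the multinomial identity $(im_j\xi)^l=\sum_{|\beta|=l}\binom{l}{\beta}\prod_a(i\tm_{k_a}\xi)^{\beta_a}$ matches each Leibniz term of $\pa_x^l F_j$ against a parallel summand of $\tfrac{(im_j\xi)^l}{t}p_j(\xi;\alpha)$. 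It therefore suffices to establish, for all $k_1,k_2,k_3\in\sh{I}_N$ with $m_j=\tm_{k_1}+\tm_{k_2}+\tm_{k_3}$ and all $\gamma_1,\gamma_2,\gamma_3\le 3$, the building block
\[
\Bigl\|\mathcal{F}_{m_j}\mathcal{U}_{m_j}^{-1}\bigl[(\pa_x^{\gamma_1}\sh{u}_{k_1})(\pa_x^{\gamma_2}\sh{u}_{k_2})(\pa_x^{\gamma_3}\sh{u}_{k_3})\bigr]-\tfrac{1}{t}\prod_a(i\tm_{k_a}\xi)^{\gamma_a}\sh{\alpha}_{k_a}\Bigr\|_{L^{\infty}_\xi}\le \tfrac{C}{t^{5/4}}\bigl(\|u\|_{H^3}+\|Ju\|_{H^2}\bigr)^3.
\]

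When some $\gamma_a=3$ (which arises only for $l=2$, $l_a=1$, $\beta_a=2$), I would first apply iterated Lemma \ref{lemma_J2} to migrate one derivative off the offending factor, rewriting the monomial as a linear combination of cubic products with all $\gamma_a\le 2$, plus a remainder of the form $R/t$. Here $R$ is cubic with one factor $J_\mu\sh{u}_k$ arising from Lemma \ref{lemma_J1} and the other factors carrying at most two $\pa_x$'s. Bounding $\mathcal{F}_{m_j}\mathcal{U}_{m_j}^{-1}[R]/t$ via Lemma \ref{lemma_prod}, placing the $J$-factor and one $u$-factor in $L^2$ (controlled respectively by $\|Ju\|_{H^2}+\|u\|_{H^1}$ and $\|u\|_{H^3}$) and the remaining factor in $L^\infty$ (controlled by $Ct^{-1/2}(\|u\|_{H^3}+\|Ju\|_{H^2})$ via Lemma \ref{lemma_asympt}), yields an $O(t^{-3/2}(\|u\|_{H^3}+\|Ju\|_{H^2})^3)$ bound, better than required.

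For the reduced case $\gamma_a\le 2$, I decompose $\mathcal{F}_{m_j}\mathcal{U}_{m_j}^{-1}=\mathcal{W}_{m_j}^{-1}\mathcal{D}^{-1}\mathcal{M}_{m_j}^{-1}$ and split $\mathcal{W}_{m_j}^{-1}=I+(\mathcal{W}_{m_j}^{-1}-I)$. For the identity piece I evaluate pointwise at $x=t\xi$ and substitute the Lemma \ref{lemma_asympt} decomposition $\pa_x^{\gamma_a}\sh{u}_{k_a}=G_a+E_a$ with $G_a(t,x)=t^{-1/2}e^{i\tm_{k_a}x^2/(2t)}(i\tm_{k_a}(x/t))^{\gamma_a}\sh{\alpha}_{k_a}(x/t)$; condition (a) produces the exact phase cancellation $\sum_a\tm_{k_a}=m_j$, so $\sqrt{t}\,e^{-im_jt\xi^2/2}\prod_a G_a(t,t\xi)=\tfrac{1}{t}\prod_a(i\tm_{k_a}\xi)^{\gamma_a}\sh{\alpha}_{k_a}(\xi)$ matches the target. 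Cross terms containing at least one $E_a$, with $\|E_a\|_{L^\infty}\le Ct^{-3/4}(\|u\|_{H^3}+\|Ju\|_{H^2})$ from Lemma \ref{lemma_asympt}, are bounded uniformly in $\xi$ using the Sobolev-type estimate $|\xi|^{\gamma_b}|\sh{\alpha}_{k_b}(\xi)|\le C(\|u\|_{H^{\gamma_b}}+\|Ju\|_{H^{\gamma_b}})$ (valid for $\gamma_b\le 2$ via the identity $\mathcal{F}_{m_k}\mathcal{U}_{m_k}^{-1}J_{m_k}u_k=(i/m_k)\pa_\xi\sh{\alpha}_k$ and the embedding $H^1_\xi\hookrightarrow L^\infty_\xi$), yielding the desired $Ct^{-5/4}(\|u\|_{H^3}+\|Ju\|_{H^2})^3$ bound. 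The $(\mathcal{W}_{m_j}^{-1}-I)$ contribution is handled by \eqref{est_w} together with the bound $\|\mathcal{D}^{-1}\mathcal{M}_{m_j}^{-1}[\text{monomial}]\|_{H^1_\xi}\le Ct^{-1}(\|u\|_{H^3}+\|Ju\|_{H^2})^3$, itself obtained via Lemma \ref{lemma_J1} applied to $J_{m_j}$ of the cubic product. The main technical obstacle is the combinatorial bookkeeping of the $J$-remainders, both from the iterated Lemma \ref{lemma_J2} redistribution step and from the $H^1_\xi$-norm estimate, so that no factor ends up requiring the unavailable norm $\|Ju\|_{H^3}$.
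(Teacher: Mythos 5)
Your proposal is correct and follows essentially the same route as the paper: condition (a) supplies the phase cancellation in the factorization $\mathcal{U}_m=\mathcal{M}_m\mathcal{D}\mathcal{W}_m\mathcal{F}_m$, Lemma \ref{lemma_J2} redistributes derivatives so that no factor ever requires the unavailable norm $\|J_{m}u\|_{H^3}$, the resulting $J$-remainders are controlled by Lemmas \ref{lemma_J1} and \ref{lemma_prod}, and the $\mathcal{W}$-errors by \eqref{est_w}. The only differences are organizational: the paper applies \eqref{key1}--\eqref{key2} to the whole product rather than Leibniz-expanding and then repairing the $\gamma_a=3$ terms, and it telescopes the $\mathcal{W}_{m}-1$ factors directly instead of passing through the WKB decomposition $G_a+E_a$ of Lemma \ref{lemma_asympt}, which is equivalent since $E_a$ is precisely the $\mathcal{M}\mathcal{D}(\mathcal{W}-1)$ error.
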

\begin{proof}
For simplicity of exposition, we treat only the case where 
$F_j=(\pa_x u_{1})(\overline{\pa_x u_{2}})(\pa_x u_{3})$ with 
$m_j=m_1-m_2+m_3$. 
The general case can be shown in the same way.

We set 
$\alpha_k^{(s)}=(im_k\xi)^s \alpha_k$ for $s\in \Z_{\ge 0}$, so that 
\[
 \pa_x^s u_k 
 =
 \mathcal{U}_{m_k} \mathcal{F}_{m_k}^{-1} \alpha_k^{(s)}
 =
 \mathcal{M}_{m_k} \mathcal{D}\mathcal{W}_{m_k} \alpha_k^{(s)},
 \qquad
 \pa_x^s \overline{u_k} 
 =
 \mathcal{U}_{-m_k} \mathcal{F}_{-m_k}^{-1} \overline{\alpha_k^{(s)}}.
\]
Remark that 
\[
 p_j(\xi;\alpha)
=(im_1\xi)(-im_2\xi)(im_3\xi)\alpha_1 \overline{\alpha_2} \alpha_3
=\alpha_1^{(1)} \overline{\alpha_2^{(1)}} \alpha_3^{(1)}.
\]

Now we consider the simplest case $l=0$. By the factorization 
of $\mathcal{U}_{m_{j}}$ and the condition $m_{j}=m_{1}-m_{2}+m_{3}$,
we have 
\begin{align*}
 \mathcal{F}_{m_j}\mathcal{U}_{m_j}^{-1} F_j
 &=
 \mathcal{W}_{m_j}^{-1} \mathcal{D}^{-1} \mathcal{M}_{m_j}^{-1}
 \Bigl[
  (\mathcal{M}_{m_1} \mathcal{D}\mathcal{W}_{m_1} \alpha_1^{(1)})
  (\mathcal{M}_{-m_2} \mathcal{D}\mathcal{W}_{-m_2} \overline{\alpha_2^{(1)}})
  (\mathcal{M}_{m_3} \mathcal{D}\mathcal{W}_{m_3} \alpha_3^{(1)})  
 \Bigr]\\
 &=
 \frac{1}{t}\mathcal{W}_{m_j}^{-1} 
 \Bigl[
  (\mathcal{W}_{m_1} \alpha_1^{(1)})
  (\mathcal{W}_{-m_2} \overline{\alpha_2^{(1)}})
  (\mathcal{W}_{m_3} \alpha_3^{(1)})  
 \Bigr]\\ 
 &=
 \frac{1}{t} p_j(\xi;\alpha) +\frac{1}{t}r_0,
\end{align*}
where
\[
 r_0=\mathcal{W}_{m_j}^{-1} 
 \Bigl[
  (\mathcal{W}_{m_1} \alpha_1^{(1)})
  (\mathcal{W}_{-m_2} \overline{\alpha_2^{(1)}})
  (\mathcal{W}_{m_3} \alpha_3^{(1)})  
 \Bigr]
 -
 \alpha_1^{(1)} \overline{\alpha_2^{(1)}} \alpha_3^{(1)}.
\]
Since we can rewrite it as 
\begin{align*}
 r_0
 =&
 (\mathcal{W}_{m_j}^{-1} -1)\Bigl[
  (\mathcal{W}_{m_1} \alpha_1^{(1)})
  (\mathcal{W}_{-m_2} \overline{\alpha_2^{(1)}})
  (\mathcal{W}_{m_3} \alpha_3^{(1)})  
 \Bigr]
 +
 \bigl\{(\mathcal{W}_{m_1} -1)\alpha_1^{(1)}\bigr\}
  (\mathcal{W}_{-m_2} \overline{\alpha_2^{(1)}})
  (\mathcal{W}_{m_3} \alpha_3^{(1)})  \\
 &+
   \alpha_1^{(1)} 
  \bigl\{(\mathcal{W}_{-m_2} -1) \overline{\alpha_2^{(1)}} \bigr\}
  (\mathcal{W}_{m_3} \alpha_3^{(1)})  
 +
   \alpha_1^{(1)} \overline{\alpha_2^{(1)}} 
  \bigl\{(\mathcal{W}_{m_3} -1)\alpha_3^{(1)}\bigr\},
\end{align*}
we can apply \eqref{est_w} and the Sobolev imbedding 
$H^1(\R^1) \hookrightarrow L^{\infty}(\R^1)$ to obtain
\[
 \|r_0\|_{L^{\infty}} \le Ct^{-1/4}\|u_1\|_{H^2} \|u_2\|_{H^2} \|u_3\|_{H^2}.
\]
Next we consider the case of $l=1$. By \eqref{key1} with $m=m_j$, $\mu=m_1$, 
$f_1=\pa_x u_1$, $f_2=\overline{\pa_x u_2}$, $f_3=\pa_x u_3$, we have 
\begin{align}
 \pa_x F_j
 = 
 \frac{m_j}{m_1} (\pa_x^2 u_1)(\overline{\pa_x u_2})(\pa_x u_3) 
 + \frac{R_1}{t},
 \label{rel_1}
\end{align}
where 
\[
 R_1
 =-im_jJ_{m_j} \Bigl[(\pa_x u_1)(\overline{\pa_x u_2})(\pa_x u_3) \Bigr]
  +
  im_j(J_{m_1}\pa_x u_1)(\overline{\pa_x u_2})(\pa_x u_3).
\]
By applying Lemma \ref{lemma_J1} to the first term and using 
Lemma \ref{lemma_prod}, we see that 
\begin{align}
\|\mathcal{F}_{m_j} \mathcal{U}_{m_j}^{-1} R_1 \|_{L^{\infty}}
\le &
 C 
 \|J_{m_1} \pa_x u_1\|_{L^2} \|{\pa_x u_2} \|_{L^2} 
 \|\pa_x u_3\|_{L^{\infty}}
 \nonumber\\
 &+
 C \|\pa_x u_1\|_{L^2}\|{J_{m_2} \pa_x u_2}\|_{L^2} 
 \|\pa_x u_3 \|_{L^{\infty}} 
 \nonumber\\ 
 &+
 C  \|\pa_x u_1 \|_{L^2} \|{\pa_x u_2}\|_{L^{\infty}}
 \|J_{m_3} \pa_x u_3\|_{L^2}
 \nonumber\\
 \le& 
 \frac{C}{t^{1/2}} 
 \sum_{k=1}^{3} \bigl( \|u_k\|_{H^1} + \|J_{m_k} u_k\|_{H^1} \bigr)^3,
 \label{est_R1}
\end{align}
where we have used the inequality 
$\|\phi\|_{L^{\infty}}
 \le Ct^{-1/2}\|\phi\|_{L^2}^{1/2} \|J_m\phi \|_{L^2}^{1/2}$ and 
the commutation relation $[\pa_x,J_m]=1$ in the 
last line. As for the first term of \eqref{rel_1}, similar computations 
as in the previous case lead to 
\begin{align*}
 \mathcal{F}_{m_j} \mathcal{U}_{m_j}^{-1} 
 \Bigl[(\pa_x^2 u_1) ( \overline{\pa_x u_2}) (\pa_x u_3)\Bigr]
  &=
 \frac{1}{t}\mathcal{W}_{m_j}^{-1} 
 \Bigl[
  (\mathcal{W}_{m_1} \alpha_1^{(2)})
  (\mathcal{W}_{-m_2} \overline{\alpha_2^{(1)}})
  (\mathcal{W}_{m_3} \alpha_3^{(1)})  
 \Bigr]\\ 
 &=
 \frac{1}{t} \alpha_1^{(2)} \overline{\alpha_2^{(1)}} \alpha_3^{(1)} 
 + 
 \frac{r_1}{t},
\end{align*}
where
\begin{align*}
r_{1}=\mathcal{W}_{m_j}^{-1} 
 \Bigl[(\mathcal{W}_{m_1} \alpha_1^{(2)})
  (\mathcal{W}_{-m_2} \overline{\alpha_2^{(1)}})
  (\mathcal{W}_{m_3} \alpha_3^{(1)})\Bigr ] 
   -\alpha_1^{(2)} \overline{\alpha_2^{(1)}} \alpha_3^{(1)}. 
\end{align*}
This can be estimated as follows: 
\begin{align*}
 \|r_1\|_{L^{\infty}} 
 \le Ct^{-1/4}\|\pa_x u_1\|_{H^2} \|u_2\|_{H^2} \|u_3\|_{H^2}.
\end{align*}
Moreover, we observe that 
\[
  \frac{im_j\xi}{t} p_j(\xi;\alpha)
 =
 \frac{m_j}{m_1} 
 \frac{im_1\xi}{t} \alpha_1^{(1)} \overline{\alpha_2^{(1)}} \alpha_3^{(1)} 
 =
 \frac{m_j}{m_1} \cdot \frac{1}{t} 
 \alpha_1^{(2)} \overline{\alpha_2^{(1)}} \alpha_3^{(1)}. 
\]
Piecing them together, we arrive at 
\begin{align*}
 \left\| 
  \mathcal{F}_{m_j} \mathcal{U}_{m_j}^{-1}\pa_x F_j 
 -\frac{im_j\xi}{t}p_j(\xi;\alpha) \right\|_{L^{\infty}_{\xi}}
 &=
 \frac{1}{t}
 \left\|
 \frac{m_j}{m_1} r_1 + \mathcal{F}_{m_j} \mathcal{U}_{m_j}^{-1} R_1
 \right\|_{L^{\infty}}\\
 &\le
 \frac{C}{t} \bigl(\|r_1\|_{L^{\infty}}
 +
 \|\mathcal{F}_{m_j} \mathcal{U}_{m_j}^{-1} R_1\|_{L^{\infty}} \bigr)\\
 &\le
 \frac{C}{t^{5/4}} 
  \sum_{k=1}^3 
 \bigl( 
    \|u_k(t,\cdot)\|_{H^3} + \|J_{m_k} u_k(t,\cdot)\|_{H^1} 
 \bigr)^3,
\end{align*}
as desired.
Finally we consider the case of $l=2$. 
By \eqref{key2} with $m=m_j$, $\mu_1=m_1$ and $\mu_2=-m_2$, we have 
\[
 \pa_x^2 F_j
 = 
 \frac{m_j^2}{-m_1m_2} (\pa_x^2 u_1)(\overline{\pa_x^2 u_2})(\pa_x u_3) 
 +\frac{R_2}{t},
\]
where
\begin{align*}
 R_2=&
 -\frac{im_j^2}{ m_1}
  J_{m_j}\Bigl[(\pa_x^2 u_1) (\overline{\pa_x u_2}) (\pa_x u_3) \Bigr]
 +\frac{im_j^2}{ m_1}(\pa_x^2 u_1)(\overline{J_{m_2} \pa_x u_2}) (\pa_x u_3)
 \\
 &-i m_j \pa_x J_{m_j} 
 \bigl[(\pa_x u_1) (\overline{\pa_x u_2}) (\pa_x u_3)\bigr] 
 +i m_j \pa_x\left[(J_{m_1}\pa_x u_1) (\overline{\pa_x u_2}) (\pa_x u_3)
 \right].
\end{align*}
As in the derivation of \eqref{est_R1}, we see that 
\[
 \|\mathcal{F}_{m_j} \mathcal{U}_{m_j}^{-1} R_2 \|_{L^{\infty}}
\le 
 \frac{C}{t^{1/2}} 
 \sum_{k=1}^{3} (\|u_k\|_{H^2} + \|J_{m_k} u_k\|_{H^2})^3.
\]
Similarly to the previous cases, we can also show that 
\begin{align*}
 \mathcal{F}_{m_j} \mathcal{U}_{m_j}^{-1} 
 \Bigl[(\pa_x^2 u_1)(\overline{\pa_x^2 u_2})(\pa_x u_3)\Bigr]
  &=
 \frac{1}{t}\mathcal{W}_{m_j}^{-1} 
 \Bigl[
  (\mathcal{W}_{m_1} \alpha_1^{(2)})
  (\mathcal{W}_{-m_2} \overline{\alpha_2^{(2)}})
  (\mathcal{W}_{m_3} \alpha_3^{(1)})  
 \Bigr]\\ 
 &=
 \frac{-m_1m_2}{m_j^2} \frac{(im_j\xi)^2}{t} p_j(\xi;\alpha)  
 + 
 \frac{r_2}{t},
\end{align*}
where
\begin{align*}
r_{2}=\mathcal{W}_{m_j}^{-1} 
 \Bigl[
  (\mathcal{W}_{m_1} \alpha_1^{(2)})
  (\mathcal{W}_{-m_2} \overline{\alpha_2^{(2)}})
  (\mathcal{W}_{m_3} \alpha_3^{(1)})  
 \Bigr]-
 \alpha_1^{(2)} \overline{\alpha_2^{(2)}} \alpha_3^{(1)}.
\end{align*}
Note that 
\[
 \|r_2\|_{L^{\infty}} 
 \le Ct^{-1/4}\|\pa_x u_1\|_{H^2} \|\pa_x u_2\|_{H^2} \|u_3\|_{H^2}.
\]
Therefore we have 
\begin{align*}
  \left\|
   \mathcal{F}_{m_j} \mathcal{U}_{m_j}^{-1}\pa_x^2 F_j 
 -\frac{(im_j\xi)^2}{t}p_j(\xi,\alpha)
  \right\|_{L^{\infty}_{\xi}}
 &\le
 \frac{C}{t} \bigl(\|r_2\|_{L^{\infty}}
 +
 \|\mathcal{F}_{m_j} \mathcal{U}_{m_j}^{-1} R_2\|_{L^{\infty}} \bigr)\\
 &\le
 \frac{C}{t^{5/4}} 
  \sum_{k=1}^3 
 \bigl( 
    \|u_k(t,\cdot)\|_{H^3} + \|J_{m_k} u_k(t,\cdot)\|_{H^2} 
 \bigr)^3,
\end{align*}
which completes the proof.
\end{proof}

\section{Smoothing effect}  \label{sec_smoothing}
In this section, we recall smoothing properties of the linear Schr\"odinger 
equations. As is well known, the standard energy method causes a derivative 
loss when the nonlinear term involves derivatives of the unknown functions. 
Smoothing effect is a useful tool to overcome this obstacle. 
Among various kinds of such techniques, we will follow the approach of 
\cite{HNP}. Let $\mathcal{H}$ be the Hilbert transform, that is, 
\[
 \mathcal{H}\psi(x):=
 \frac{1}{\pi} \, \mathrm{p.v.}\int_{\R} \frac{\psi(y)}{x-y}dy.
\]
With a non-negative weight function $\Phi(x)$ and a non-zero real constant 
$m$, let us also define the operator $S_{\Phi,m}$ by
\[
 S_{\Phi,m}\psi (x):=
 \left\{ \cosh\biggl(\int_{-\infty}^{x} \Phi(y) dy\biggr) \right\}
 \psi(x)
 - i\, \mathrm{sgn}(m)
 \left\{\sinh\biggl(\int_{-\infty}^{x} \Phi(y) dy\biggr) \right\}
 \mathcal{H}\psi(x).
\]
Note that $S_{\Phi,m}$ is $L^2$-automorphism and that both 
$\|S_{\Phi,m}\|_{L^2\to L^2}$, $\|S_{\Phi,m}^{-1}\|_{L^2\to L^2}$ 
are dominated by $C\exp(\|\Phi\|_{L^1})$. This operator enables us to 
gain the half-derivative $|\pa_x|^{1/2}$. 
More precisely, we have the following:

\begin{lem} \label{lemma_smoothing}
Let $m$, $\mu_1,\ldots , \mu_N$ be non-zero real constants. 
Let $v$ be a $\C$-valued smooth function of $(t,x)$, and 
let $w=(w_j)_{j \in I_N}$ be a $\C^N$-valued smooth function of $(t,x)$. 
We set $\Phi=\eta(|w|^{2}+|\partial_{x}w|^2)$ with $\eta\ge 1$, and 
$S=S_{\Phi(t,\cdot),m}$. 
Then we have 
\begin{align*}
 \frac{d}{dt}\|Sv(t)\|_{L^2}^2 
 +&
 \frac{1}{|m|}\int_{\R} \Phi(t,x) \Bigl| S|\pa_x|^{1/2}  v(t,x) \Bigr|^2 dx\\
 &\le
 2 \Bigl| \Jb{Sv(t), S\mathcal{L}_m v(t)}_{L^2}\Bigr| 
 + CB(t) \|v(t)\|_{L^2}^2,
\end{align*}
where
\begin{align*}
 B(t)
 =
 e^{C\eta \|w\|_{H^1}^2}
 \left\{
  \eta \|w(t)\|_{W^{2,\infty}}^2 + \eta^{3} \|w(t)\|_{W^{1,\infty}}^6
  +
  \eta  \sum_{k \in I_N}\|w_k(t)\|_{H^1} 
  \|\mathcal{L}_{\mu_k} w_k(t)\|_{H^1}
 \right\}
\end{align*}
and the constant $C$ is independent of $\eta$. 
We denote by $W^{s,\infty}$ the $L^{\infty}$-based Sobolev space of 
order $s \in \Z_{\ge 0}$.
\\
\end{lem}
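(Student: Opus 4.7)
The operator $S$ is a Doi-type gauge transform designed so that the commutator $[S,\tfrac{1}{2m}\pa_x^2]$ produces a positive smoothing contribution of order $\Phi\cdot|\pa_x|$. My plan is to derive an energy identity for $Sv$, isolate this commutator as the source of the smoothing, and separately bound the $\pa_t S$ and remainder terms. Starting from the identity $\mathcal{L}_m(Sv) = S\mathcal{L}_m v + \tfrac{1}{2m}[\pa_x^2, S]v + i(\pa_t S)v$, I would pair with $Sv$ in $L^2$ and take imaginary parts (noting that $\jb{Sv, \pa_x^2 Sv}_{L^2}$ is real after integration by parts) to obtain
\begin{align*}
\frac{d}{dt}\|Sv(t)\|_{L^2}^2 = -2\imagpart \jb{Sv, S\mathcal{L}_m v}_{L^2} - \frac{1}{m}\imagpart \jb{Sv, [\pa_x^2, S]v}_{L^2} + 2\realpart \jb{Sv, (\pa_t S)v}_{L^2}.
\end{align*}
The first term on the right is bounded by $2|\jb{Sv, S\mathcal{L}_m v}_{L^2}|$, matching the input quantity in the claimed inequality.

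For the second term I would set $\Psi(t,x) = \int_{-\infty}^x \Phi(t,y)\,dy$ and $\sigma = \mathrm{sgn}(m)$. Using $\mathcal{H}\pa_x = |\pa_x|$ together with $[\mathcal{H},\pa_x]=0$, a direct computation of $[\pa_x^2, S]$ yields
\[
[\pa_x^2, S] = 2\Phi\bigl(\sinh\Psi\cdot\pa_x - i\sigma\cosh\Psi\cdot|\pa_x|\bigr) + R,
\]
where the remainder $R$ collects zeroth-order pieces built from $\pa_x\Phi$, $\pa_x^2\Phi$, $\Phi^2$, $\sinh\Psi$, $\cosh\Psi$, and $\mathcal{H}$. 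The algebraic key will be the identity $S = \exp(\Psi\cdot(-i\sigma\mathcal{H}))$, valid because $(-i\sigma\mathcal{H})^2 = I$ on $L^2$; after commuting $S$ past $|\pa_x|^{1/2}$ and past multiplication by $\Phi^{1/2}$, with the commutators controlled by Calder\'on-type bounds of the form $\|[\mathcal{H}, f]\|_{L^2\to L^2}\leq C\|\pa_x f\|_{L^\infty}$, the leading pairing should collapse to
\[
-\frac{1}{m}\imagpart\jb{Sv,\, 2\Phi(\sinh\Psi\pa_x - i\sigma\cosh\Psi|\pa_x|)v}_{L^2} \leq -\frac{1}{|m|}\int_{\R}\Phi|S|\pa_x|^{1/2}v|^2\,dx + C\|\Phi\|_{W^{1,\infty}}\|Sv\|_{L^2}^2,
\]
after which transposition places the smoothing term on the left-hand side of the lemma.

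Finally, the commutator remainder and the $\pa_t S$ contribution must be absorbed into $CB(t)\|v\|_{L^2}^2$. Since $\|S\|_{L^2\to L^2}, \|S^{-1}\|_{L^2\to L^2}\leq Ce^{\|\Phi\|_{L^1}}\leq Ce^{C\eta\|w\|_{H^1}^2}$, it will be enough to bound the associated operator norms in terms of $\Phi$ alone. The pointwise estimates $\|\pa_x^\ell \Phi\|_{L^\infty}\leq C\eta\|w\|_{W^{\ell+1,\infty}}^2$ for $\ell\in\{0,1,2\}$, together with $\|\Psi\|_{L^\infty}\leq\|\Phi\|_{L^1}$ (absorbed into the exponential factor), furnish the $\eta\|w\|_{W^{2,\infty}}^2 + \eta^3\|w\|_{W^{1,\infty}}^6$ portion of $B(t)$, the cubic-in-$\Phi$ term arising from triple-product remainders such as $\Phi\sinh\Psi\cosh\Psi$. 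For the $\pa_t S$ term I would write $\pa_t\Psi(x) = \int_{-\infty}^x \pa_t\Phi\,dy$ and substitute $\pa_t w_k = -i\mathcal{L}_{\mu_k}w_k + \tfrac{i}{2\mu_k}\pa_x^2 w_k$; the potentially dangerous $\pa_x^3 w_k$ contribution in $\pa_t\pa_x w_k$ is then eliminated by a single integration by parts under the antiderivative, turning it into a purely imaginary multiple of $|\pa_x^2 w_k|^2$ that vanishes upon taking real parts, and what survives is bounded by $C\eta\sum_k\|w_k\|_{H^1}\|\mathcal{L}_{\mu_k}w_k\|_{H^1}$ as required. The hard part will be the commutator identification in the second paragraph: extracting precisely the coefficient $1/|m|$ and the quadratic form $|S|\pa_x|^{1/2}v|^2$ from $\tfrac{1}{m}\imagpart\jb{Sv, [\pa_x^2, S]v}_{L^2}$ requires tracking every commutator of $\mathcal{H}$ with multiplication operators and verifying that all residuals fit inside $B(t)$.
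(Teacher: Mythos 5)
Your proposal is correct and follows essentially the same route as the paper's appendix: the energy identity for $Sv$, identification of the leading part of $\tfrac{1}{2m}[\pa_x^2,S]$ with $-\tfrac{i}{|m|}\Phi S|\pa_x|$, Calder\'on-type commutator bounds (the paper uses $\|[|\pa_x|^{1/2},g]f\|_{L^2}+\|[|\pa_x|^{1/2}\mathcal{H},g]f\|_{L^2}\le C\|g\|_{W^{1,\infty}}\|f\|_{L^2}$ rather than the $[\mathcal{H},g]$ bound you cite, but to the same effect) to pass from $\realpart\jb{\Phi S|\pa_x|v,Sv}_{L^2}$ to $\|\sqrt{\Phi}\,S|\pa_x|^{1/2}v\|_{L^2}^2$, and the substitution of the equation into $\pa_t\Phi$ with one integration by parts under the antiderivative to eliminate the third-derivative term. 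The only cosmetic caveat is that $S\ne\exp(-i\,\mathrm{sgn}(m)\Psi\mathcal{H})$ as a genuine operator exponential (since multiplication by $\Psi$ and $\mathcal{H}$ do not commute), but that remark is not load-bearing in your argument.
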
 

This lemma is essentially the same as Lemma~{2.1} in \cite{HNP}, although 
we need slight modifications to fit for our purpose. For the convenience of 
the readers, we will give the proof of this lemma in the appendix.

By using Lemma \ref{lemma_smoothing} combined with the following 
auxiliary lemma, 
we can get rid of the derivative loss coming from the nonlinear terms.

\begin{lem} \label{lemma_aux2}
Let $m_1,\ldots, m_N$ be non-zero real constants. 
Let $v=(v_j)_{j\in I_N}$, $w=(w_j)_{j \in I_N}$ be 
$\C^N$-valued smooth functions of $x \in \R$. 
Suppose that $q_{1,jk}$ and $q_{2,jk}$  are quadratic homogeneous 
polynomials in $(w,\pa_x w, \overline{w}, \overline{\pa_x w})$. 
We set $\Phi=\eta(|w|^{2}+|\partial_{x}w|^2)$ with $\eta\ge 1$, and 
$S=S_{\Phi(t,\cdot),m}$ with $\eta\ge 1$, and 
$S_j=S_{\Phi, m_j}$ for $j \in I_N$. Then we have
\begin{align*}
 \sum_{j,k\in I_N}
 &\biggl(
  \left|\Jb{S_j v_j, S_j \bigl( q_{1,jk}\pa_x v_k \bigr)}_{L^2} \right|
 + 
 \left|
  \Jb{S_j v_j, S_j \bigl( q_{2,jk}\overline{\pa_x v_k} \bigr)}_{L^2} 
 \right| \biggr)\\
 &\le
 \frac{C}{\eta} e^{C\eta \|w\|_{H^1}^2} 
  \sum_{k \in I_N} 
  \int_{\R} \Phi(x) \Bigl| S_k |\pa_x|^{1/2} v_k(x) \Bigr|^2 dx\\
 &\hspace{4mm}+
 Ce^{C \eta \|w\|_{H^1}^2} 
 \bigl( 1+ \eta^{2}\|w\|_{H^1}^4+\eta^{2}\|w\|_{W^{1,\infty}}^4 \bigr) 
 \|w\|_{W^{2,\infty}}^2 \|v\|_{L^2}^2,
\end{align*}
where the constant $C$ is independent of $\eta$.\\
\end{lem}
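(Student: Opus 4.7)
The plan is to absorb each inner product into the positive smoothing integral obtained in Lemma~\ref{lemma_smoothing} by rewriting it as a symmetric quadratic form in $|\pa_x|^{1/2}v$. The factor $1/\eta$ on the right-hand side materializes from the pointwise inequality $|q_{1,jk}|+|q_{2,jk}|\le C(|w|^2+|\pa_x w|^2)=\frac{C}{\eta}\Phi$, which is available because $q_{1,jk}$ and $q_{2,jk}$ are quadratic in $(w,\pa_x w,\overline{w},\overline{\pa_x w})$, together with the fact that the very same weight $\Phi$ reappears in the smoothing norm.

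Concretely, I would use the identity $\pa_x=i\mathcal{H}|\pa_x|$ together with $|\pa_x|=|\pa_x|^{1/2}|\pa_x|^{1/2}$ to rewrite
\[
 q_{1,jk}\pa_x v_k=i\,q_{1,jk}\mathcal{H}|\pa_x|^{1/2}\bigl(|\pa_x|^{1/2}v_k\bigr).
\]
After pushing $S_j^{\ast}$ onto $S_j v_j$ and transferring one half-derivative to the left of the pairing (by self-adjointness of $|\pa_x|^{1/2}$), the leading contribution of $\Jb{S_j v_j,S_j(q_{1,jk}\pa_x v_k)}_{L^2}$ takes the shape
\[
 \int_{\R}q_{1,jk}(x)\bigl(S_k|\pa_x|^{1/2}v_k\bigr)(x)\,\overline{\bigl(S_j|\pa_x|^{1/2}v_j\bigr)(x)}\,dx
\]
up to bounded operator factors and commutator remainders. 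Weighting Cauchy--Schwarz by $|q_{1,jk}|\le C\Phi/\eta$ produces the first summand on the right-hand side of the lemma, after multiplication by the operator-norm bound $\|S_{\Phi,m_j}^{\pm 1}\|_{L^2\to L^2}\le Ce^{C\eta\|w\|_{H^1}^2}$. The inner products involving $\overline{\pa_x v_k}$ are handled in the same manner after complex conjugation, which only exchanges $\mathcal{H}$ with $-\mathcal{H}$ and therefore leaves the Cauchy--Schwarz absorption intact.

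The main technical obstacle is the bookkeeping of the commutators $[S_j,q_{1,jk}]$, $[|\pa_x|^{1/2},q_{1,jk}]$, and $[\mathcal{H},q_{1,jk}]$ that emerge when the rearrangement above is made rigorous. The latter two are standard Calder\'on--Kato--Ponce commutators and obey
\[
 \|[\mathcal{H},b]f\|_{L^2}+\|[|\pa_x|^{1/2},b]f\|_{L^2}\le C\|\pa_x b\|_{L^{\infty}}\|f\|_{L^2};
\]
the first is handled by differentiating the functions $\cosh(\int^x\Phi)$ and $\sinh(\int^x\Phi)$ in the definition of $S_j$, which exposes factors of $\Phi$ bounded by $\|\Phi\|_{L^{\infty}}\le C\eta\|w\|_{W^{1,\infty}}^2$. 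Since $\pa_x q_{1,jk}$ is at most cubic in $(w,\pa_x w,\pa_x^2 w)$, the accumulated commutator residues admit the bound
\[
 Ce^{C\eta\|w\|_{H^1}^2}\bigl(1+\eta^2\|w\|_{H^1}^4+\eta^2\|w\|_{W^{1,\infty}}^4\bigr)\|w\|_{W^{2,\infty}}^2\|v\|_{L^2}^2,
\]
in which the $\eta^2$ terms arise from squaring the contribution of $\pa_x\!\int^x\!\Phi=\Phi$ when it is pulled through $\cosh$ and $\sinh$; this reproduces exactly the second summand in the desired estimate.
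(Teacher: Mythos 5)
Your overall strategy is the right one, and it is essentially the strategy of the reference the paper points to: the paper itself gives no proof of this lemma (it is dismissed as a paraphrase of Lemma~2.3 of \cite{HNP}), and the ingredients you name --- the pointwise bound $|q_{1,jk}|+|q_{2,jk}|\le C\Phi/\eta$, splitting $\pa_x$ into two half-derivatives, transferring one of them across the pairing, weighted Cauchy--Schwarz for the main term, and commutator estimates for the remainders --- are exactly the ingredients of that argument and of the appendix proof of Lemma~\ref{lemma_smoothing}. Two minor slips first: with the paper's normalization $|\pa_x|=\mathcal{H}\pa_x$ and $\mathcal{H}^2=-1$ one has $\pa_x=-\mathcal{H}|\pa_x|$, not $i\mathcal{H}|\pa_x|$; and the displayed estimate $\|[\mathcal{H},b]f\|_{L^2}\le C\|\pa_x b\|_{L^\infty}\|f\|_{L^2}$ is not the Calder\'on commutator theorem and is false in general --- the theorem bounds $[\mathcal{H},b]\pa_x$ by $\|b'\|_{L^\infty}$, while $[\mathcal{H},b]$ itself is only bounded by $\|b\|_{L^\infty}$. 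What you actually need here is precisely Lemma~\ref{lemma_commutator}, i.e.\ the $L^2$-boundedness of $[|\pa_x|^{1/2},g]$ and $[|\pa_x|^{1/2}\mathcal{H},g]$.

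The genuine gap is in the sentence asserting that the leading contribution "takes the shape" $\int_{\R}q_{1,jk}\,(S_k|\pa_x|^{1/2}v_k)\,\overline{(S_j|\pa_x|^{1/2}v_j)}\,dx$ up to bounded factors and commutator remainders. In $\Jb{S_jv_j,S_j(q_{1,jk}\pa_xv_k)}_{L^2}$ \emph{both} smoothing operators carry the index $j$, and the substitution $\pa_x=-\mathcal{H}|\pa_x|$ attaches an extra Hilbert transform to one of the factors. To reach the right-hand side of the lemma you must (i) trade $S_j$, acting on a half-derivative of $v_k$, for $S_k$, and (ii) dispose of the stray $\mathcal{H}$; neither $\mathcal{H}$ nor $S_j-S_k=-i\,(\mathrm{sgn}\,m_j-\mathrm{sgn}\,m_k)\sinh\bigl(\int_{-\infty}^{x}\Phi\bigr)\mathcal{H}$ commutes with multiplication by $\Phi$, so these operators cannot simply be moved in and out of the weighted integral $\int\Phi|\cdot|^2\,dx$. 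The naive error terms produced this way contain $\||\pa_x|^{1/2}v_k\|_{L^2}$, which is \emph{not} controlled by $\|v\|_{L^2}$, so they are not "commutator remainders" of the admissible size until one runs the commutator machinery once more (essentially the $L^2$-boundedness of $[\Phi^{1/2},\mathcal{H}]|\pa_x|^{1/2}$ and $[\Phi^{1/2},S_k^{\pm1}]|\pa_x|^{1/2}$, with constants of the stated form). This index-matching step is the only place where the lemma goes beyond the scalar case of \cite{HNP} (it is vacuous when all the $m_j$ have the same sign, since then $S_j=S_k$), and it is exactly the step your sketch passes over; as written, the proposal would not close.
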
 

We skip the proof of Lemma~\ref{lemma_aux2} because this is nothing more 
than a paraphrase of Lemma~{2.3} in \cite{HNP}.

\section{A priori estimate}  \label{sec_apriori}

Let $T\in (0,+\infty]$, and 
let $u=(u_j)_{1\le j \le N}\in C([0,T);H^3\cap H^{2,1})$ be 
a solution to \eqref{nls_system} for $t\in [0,T)$. 
As in Section \ref{sec_prelim}, we set 
$\alpha_j(t,\xi)=
\mathcal{F}_{m_j} \Bigl[ \mathcal{U}_{m_j}^{-1} u_j(t,\cdot) \Bigr](\xi)$, 
$\alpha(t,\xi)=(\alpha_j(t,\xi))_{j \in I_N}$, and define
\begin{align*}
 E(T) =\sup_{0\le t< T}
 \sum_{j\in I_N}\biggl[
 (1+t)^{-\frac{\gamma}{3}} \Bigl(
  \|u_j(t)\|_{H^{3}} + \|J_{m_j} u_j(t)\|_{H^{2}}
 \Bigr) 
 + 
 \sup_{\xi \in \R} \Bigl(\jb{\xi}^2 |\alpha_j(t,\xi)|\Bigr)
 \biggr]
\end{align*}
with $\gamma >0$. The goal of this section is to show the following: 

\begin{lem} \label{lem_apriori}
Assume the conditions (a) and (b$_0$) are satisfied. 
Let $\gamma  \in (0,1/4)$. 
There exist positive constants 
$\eps_{1}$ and $K$ such that 
\begin{align}
 E(T)\le \eps^{2/3}
 \label{est_before}
\end{align}
implies  
\begin{align*}
 E(T) \le K\eps,
\end{align*}
provided that $\eps=\|\varphi\|_{H^{3}\cap H^{2,1}}\le \eps_{1}$.
\end{lem}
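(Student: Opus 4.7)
The plan is to run a standard bootstrap: assuming $E(T)\le \eps^{2/3}$, I would close the estimate by bounding the two pieces of $E(T)$ separately, namely the weighted Sobolev part $(1+t)^{-\gamma/3}(\|u_j\|_{H^3}+\|J_{m_j}u_j\|_{H^2})$ and the pointwise Fourier part $\sup_\xi \jb{\xi}^2|\alpha_j|$. Standard local theory on $[0,1]$ handles the initial time, so throughout I focus on $t\ge 1$. First, I would use Lemma~\ref{lemma_asympt} to convert the pointwise Fourier control into $L^\infty$ decay of $u_j$ and its derivatives: since $|m_j\xi|^l|\alpha_j(t,\xi)|\le CE(T)$ for $l=0,1,2$ (using $\jb{\xi}^2|\alpha_j|\le E(T)$ and $|\xi|^l\le C\jb{\xi}^2$ for $l\le 2$), and the remainder $t^{-3/4}(\|\pa_x^l u_j\|_{L^2}+\|J_{m_j}\pa_x^l u_j\|_{L^2})$ is controlled via the commutation $[\pa_x,J_{m_j}]=1$ and the $E(T)$ bound, I obtain $\|\pa_x^l u_j(t)\|_{L^\infty}\le CE(T)(1+t)^{-1/2}$ for $l=0,1,2$.

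For the Sobolev piece I would apply the smoothing Lemma~\ref{lemma_smoothing} with $v=\pa_x^k u_j$ ($k=0,\ldots,3$) and $w=u$, and analogously with $v=\pa_x^k J_{m_j}u_j$ ($k\le 2$). To treat the latter one distributes $J_{m_j}$ across $F_j$ by Lemma~\ref{lemma_J1}; here condition~(a) is exactly what identifies, for each surviving monomial, a valid resonance $m_j=\tm_{k_1}+\tm_{k_2}+\tm_{k_3}$ so that Lemma~\ref{lemma_J1} applies without generating the loss-of-decay term $it(1/m-1/(\mu_1+\mu_2+\mu_3))\pa_x(\cdots)$ noted in the remark following it. The remaining derivative loss from $\pa_x^{k+1}u$-type terms in $\pa_x^k F_j$ is absorbed via Lemma~\ref{lemma_aux2}. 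Plugging the $L^\infty$ bound from Step~1 into the coefficient $B(t)$ of Lemma~\ref{lemma_smoothing} gives a differential inequality of the schematic form
\[
 \frac{d}{dt}\bigl(\|u_j\|_{H^3}^2 + \|J_{m_j}u_j\|_{H^2}^2\bigr)
 \le \frac{CE(T)^2}{1+t}\bigl(\|u_j\|_{H^3}^2 + \|J_{m_j}u_j\|_{H^2}^2\bigr)
 + \text{(lower order)},
\]
whose Grönwall integration produces $\|u_j(t)\|_{H^3}+\|J_{m_j}u_j(t)\|_{H^2}\le C\eps(1+t)^{CE(T)^2}$. For $\eps_1$ small enough the exponent $CE(T)^2\le C\eps^{4/3}$ is $\le \gamma/3$, closing this piece.

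For the pointwise Fourier bound I would use Proposition~\ref{prp_main}. The identity $\xi^2 \mathcal{F}_{m_j}\mathcal{U}_{m_j}^{-1}\phi=-m_j^{-2}\mathcal{F}_{m_j}\mathcal{U}_{m_j}^{-1}[\pa_x^2 \phi]$ combined with Proposition~\ref{prp_main} at $l=0$ and $l=2$ yields
\[
 i\pa_t\bigl[\jb{\xi}^2\alpha_j\bigr] = \frac{\jb{\xi}^2}{t}p_j(\xi;\alpha) + \tilde r_j,
 \qquad
 \|\tilde r_j(t)\|_{L^\infty_\xi}\le C t^{\gamma-5/4}E(T)^3,
\]
where I used the Step~2 bootstrap to control $(\|u_k\|_{H^3}+\|J_{m_k}u_k\|_{H^2})^3\le (1+t)^\gamma E(T)^3$. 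Setting $\beta=\jb{\xi}^2\alpha$ and differentiating $\jb{A\beta,\beta}_{\C^N}$ at fixed $\xi$, the main term becomes $\tfrac{2\jb{\xi}^4}{t}\imagpart\jb{p(\xi;\alpha),A\alpha}_{\C^N}$, which is $\le 0$ by (b$_0$) and the positivity of $A$; the remainder is controlled by $C|\tilde r||\beta|$. Hence $\tfrac{d}{dt}|\beta|_A\le Ct^{\gamma-5/4}E(T)^3$, and $\gamma<1/4$ is precisely what makes this integrable on $[1,\infty)$. Combined with a local-existence bound $|\beta(1,\xi)|\le C\eps$ (derived from $\|u(1)\|_{H^3\cap H^{2,1}}\le C\eps$ and the relation between weighted $L^2$-Sobolev norms of $\alpha_j$ and $u_j$, $J_{m_j}u_j$), this yields $\sup_\xi\jb{\xi}^2|\alpha_j(t,\xi)|\le C\eps + CE(T)^3\le C\eps$, closing the bootstrap with a suitable choice of $K$.

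The main obstacle I expect is Step~2: the smoothing inequality must be set up so that every $\pa_x^{k+1}u$-type loss is matched by the half-derivative gain provided by $\Phi=\eta(|u|^2+|\pa_x u|^2)$, and the $J_{m_j}$-commutation via Lemma~\ref{lemma_J1} must be carried out without accidentally producing non-resonant remainders — both hinges on condition~(a). Once this is done, the threshold $\gamma<1/4$ appears naturally from Step~3 as the requirement that $t^{\gamma-5/4}$ be integrable at infinity, so the slow polynomial Sobolev growth allowed by Step~2 is still compatible with the pointwise closure in Step~3.
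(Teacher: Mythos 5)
Your proposal is correct and follows essentially the same route as the paper: $L^\infty$ decay from the $\jb{\xi}^2|\alpha_j|$ bound via Lemma~\ref{lemma_asympt}, energy/smoothing estimates with Lemmas~\ref{lemma_smoothing}, \ref{lemma_aux2} and \ref{lemma_J1} for the Sobolev part, and the ODE \eqref{profile} with Proposition~\ref{prp_main} and condition (b$_0$) for the pointwise part. The only deviations are cosmetic — you close the top-order energy estimate by Gr\"onwall with a small exponent $(1+t)^{C\eps^{2/3}}$ where the paper substitutes the bootstrap bound and integrates $C\eps^2(1+t)^{-1+2\gamma/3}$ directly, and you work with $\jb{\xi}^2\alpha$ in the quadratic form rather than $\jb{\alpha,A\alpha}$ itself — neither of which affects the argument.
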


The proof of this lemma will be divided into two parts. 

\subsection{$L^2$-estimates}  \label{subsec_L2}

In the first part, we consider the bounds for $\|u_j(t)\|_{H^3}$ and 
$\|J_{m_j}u_j(t)\|_{H^2}$. It is enough to show
\begin{align}
 \sum_{j\in I_N}\sum_{l=0}^{1} \|J_{m_j}^l u_j(t)\|_{L^2}
 \le 
 C\eps + C\eps^2(1+t)^{\gamma/3}
 \label{est_lower}
\end{align}
and 
\begin{align}
 \sum_{j \in I_N}\sum_{l=0}^{1}\|\pa_x^{3-l} J_{m_j}^l u_j(t)\|_{L^2}^2
 \le 
 C\eps^2 (1+t)^{2\gamma/3}
 \label{est_high}
\end{align}
for $t\in [0,T)$ under the assumption \eqref{est_before}. 
First we remark that \eqref{est_before} implies a rough 
$H^1$-bound
\begin{align}
 \|u_j(t)\|_{H^1} 
 \le
 C\|\alpha_j(t)\|_{H^{0,1}}
 \le
 C\left(\int_{\R} \frac{d\xi}{\jb{\xi}^2}\right)^{1/2} 
 \sup_{\xi \in \R} \left( \jb{\xi}^2 |\alpha_j(t,\xi)|\right)
 \le 
 C\eps^{2/3}
 \label{est_rough1}
\end{align}
for $t \in [0,T)$.
We also deduce from \eqref{est_before} that 
\begin{align*}
 \|u_j(t)\|_{W^{2,\infty}} \le \frac{C\eps^{2/3}}{(1+t)^{1/2}}
\end{align*}
for $t \in [0,T)$. Indeed, it follows from Lemma \ref{lemma_asympt}
and the relation $[\partial_{x}, J_{m_{j}}]=1$ that 
\begin{align*}
 \|u_j(t)\|_{W^{2,\infty}} 
 \le 
 \frac{C}{t^{1/2}}\sup_{\xi\in \R}|\jb{\xi}^2\alpha_j(t,\xi)| 
 + \frac{C}{t^{3/4}} 
  \bigl( \|u_j(t)\|_{H^2} + \|J_{m_j} u_j(t)\|_{H^2} \bigr)
 \le 
 \frac{C\eps^{2/3}}{t^{1/2}}
\end{align*}
for $t\ge 1$, and $H^1(\R^1) \hookrightarrow L^{\infty}(\R^1)$ yields 
$\|u_j(t)\|_{W^{2,\infty}} \le C\|u_j(t)\|_{H^3} \le C\eps^{2/3}$ 
for $t \le 1$.

Now we consider the easier estimate \eqref{est_lower}. It follows from the 
standard energy method that 
\begin{align*}
 \frac{d}{dt} \|u_j(t)\|_{L^2}
 &\le 
 \|F_j(u(t), \pa_x u(t))\|_{L^2}\\
 &\le 
 C\|u(t)\|_{W^{1,\infty}}^2 \|u(t)\|_{H^1}\\
 &\le 
 C\left(\frac{\eps^{2/3}}{(1+t)^{1/2}}\right)^2 \cdot C\eps^{2/3}\\
 &\le 
 \frac{C\eps^{2}}{1+t}.
\end{align*}
Also we see from Lemma \ref{lemma_J1} that 
\[
 \mathcal{L}_{m_j}J_{m_j} u_j
 = 
 \sum_{k \in I_N} \Bigl(
 q_{1,jk} J_{m_k}\pa_x u_{k}
 +
 q_{2,jk} \overline{J_{m_k} \pa_x u_k} 
 + 
 q_{3,jk} J_{m_k}u_{k}
 +
 q_{4,jk} \overline{J_{m_k} u_k} 
 \Bigr),
\]
where $q_{1,jk}, \ldots, q_{4,jk}$ are quadratic homogeneous 
polynomials in $(u,\pa_x u, \overline{u}, \overline{\pa_x u})$. 
Then the standard energy method again implies
\[
 \frac{d}{dt} \|J_{m_j}u_j(t)\|_{L^2} 
 \le 
 C\|u\|_{W^{1,\infty}}^2 \sum_{k \in I_N} 
 (\|u_{k}\|_{H^1}+ \|J_{m_k} u_k\|_{H^1})
 \le 
 \frac{C\eps^{2}}{(1+t)^{1-\gamma/3}}.
\]
These lead to \eqref{est_lower}.

Next we consider \eqref{est_high}. We set 
$v_{jl}=\pa_x^{3-l} J_{m_j}^l u_j$ for $l\in \{0,1\}$ and $j\in I_N$. 
We  apply Lemma \ref{lemma_smoothing} with 
$m=m_{j}$, $\mu_k=m_k$, $v=v_{jl}$, $w=u$, $\eta=\eps^{-2/3}$. 
Then we obtain
\begin{align}
 &\frac{d}{dt} \|S_j v_{jl}(t)\|_{L^2}^2
 + 
 \frac{1}{|m_j|}
 \int_{\R} \Phi(t,x) \Bigl| S_j |\pa_x|^{1/2} v_{jl}(t)\Bigr|^2dx
 \nonumber\\
&\le
 2\left|
 \Jb{S_j v_{jl}, S_j \pa_x^{3-l} J_{m_j}^l F_j(u,\pa_x u)}_{L^2}
 \right|
 + 
CB(t) \|v_{jl}(t)\|_{L^2}^2,
 \label{est_smoothing_a}
\end{align}
where 
\begin{align*}
 B(t)
 &= 
 e^{\frac{C}{\eps^{2/3}} \|u\|_{H^1}^2}
 \left(
  \eps^{-2/3}\|u\|_{W^{2,\infty}}^2 + \eps^{-2} \|u\|_{W^{1,\infty}}^6
  +
  \eps^{-2/3}\sum_{k \in I_N}\|u_k\|_{H^1} \|F_k(u,\pa_x u)\|_{H^1}
 \right)\\
 &\le 
 \frac{C\eps^{2/3}}{1+t}.
\end{align*}
To estimate the first term of the right-hand side of \eqref{est_smoothing_a}, 
we use Lemma \ref{lemma_J1} and the usual Leibniz rule to split 
$\pa_x^{3-l} J_{m_j}^l F_j(u,\pa_x u)$ into the following form: 
\[
 \sum_{k \in I_N} \Bigl(
 g_{1,jkl} \pa_x v_{kl}
 +
 g_{2,jkl} \overline{\pa_x v_{kl}} 
 \Bigr)
 +
 h_{jl},
\]
where $g_{1,jkl}$ and $g_{2,jkl}$ are quadratic homogeneous 
polynomials in $(u,\pa_x u, \overline{u}, \overline{\pa_x u})$, and $h_{jl}$ 
is a cubic term satisfying 
\begin{align*}
 \|h_{jl}\|_{L^2} 
 \le 
 C\|u(t)\|_{W^{2,\infty}}^2 
 \sum_{k \in I_N}(\|u_k(t)\|_{H^3}+\|J_{m_k} u_k(t)\|_{H^2})
 \le
 \frac{C\eps^{2}}{(1+t)^{1-\gamma/3}}.
\end{align*}
Then Lemma \ref{lemma_aux2} and the $L^{2}$-automorphism
of $S_{j}$ lead to 
\begin{align*}
 &\sum_{j \in I_N}
  \left|\Jb{S_jv_{jl}, S_j\pa_x^{3-l} J_{m_j}^l F_j(u,\pa_x u)}_{L^2} \right|
 \\
 &\le
 \sum_{j,k \in I_N}
 \biggl(
  \left|\Jb{S_j v_{jl}, S_j \bigl(g_{1,jkl}\pa_x v_{kl} \bigr)}_{L^2} \right|
 + 
 \left|
  \Jb{S_j v_{jl}, S_j \bigl(g_{2,jkl}\overline{\pa_x v_{kl}} \bigr)}_{L^2} 
 \right| \biggr)
 + \sum_{j \in I_N}\|S_j v_{jl}\|_{L^2} \|S_j h_{jl}\|_{L^2}\\
 &\le
 C\eps^{2/3} e^{\frac{C}{\eps^{2/3}}\|u\|_{H^1}^2} 
  \sum_{k \in I_N}\int_{\R} 
   \Phi(t,x) \Bigl| S_k|\pa_x|^{1/2} v_{kl}(t,x) \Bigr|^2 dx
 \\
 &\hspace{6mm}+
 Ce^{\frac{C}{\eps^{2/3}}\|u\|_{H^1}^2} 
 \bigl( 1+ \eps^{-4/3}\|u\|_{H^1}^4 +\eps^{-4/3}\|u\|_{W^{1,\infty}}^4 \bigr) 
 \|u\|_{W^{2,\infty}}^2 \sum_{k \in I_N}\|v_{kl}\|_{L^2}^2\\
 &\hspace{6mm}+
 Ce^{\frac{C}{\eps^{2/3}}\|u\|_{H^1}^2} \sum_{j \in I_N}
 \|v_{jl}\|_{L^2} \|h_{jl}\|_{L^2}\\
 &\le
  C_{0}\eps^{2/3} 
 \sum_{k \in I_N} \int_{\R} 
  \Phi(t,x) \left| S_k |\pa_x|^{1/2} v_{kl}(t,x)\right|^2dx
  +
  \frac{C\eps^{8/3}}{(1+t)^{1-2\gamma/3}}
\end{align*}
with some positive constant $C_0$ not depending on $\eps$. 
Summing up, we obtain 
\begin{align*}
 \frac{d}{dt} \sum_{j \in I_N} \|S_j v_{jl}(t)\|_{L^2}^2
 &\le 
 \sum_{k \in I_N} \left( 2C_0\eps^{2/3}- \frac{1}{|m_k|} \right)
 \int_{\R} \Phi(t,x) \Bigl| S_k |\pa_x|^{1/2} v_{kl}(t,x)\Bigr|^2dx\\
 &\hspace{6mm}+ 
  \frac{C\eps^{8/3}}{(1+t)^{1-2\gamma/3}}
 + 
  \frac{C\eps^{2/3}}{1+t} \cdot 
 \bigl(C\eps^{2/3} (1+t)^{\gamma/3} \bigr)^2\\
 &\le 
 \frac{C\eps^{2}}{(1+t)^{1-2\gamma/3}},
\end{align*}
provided that 
\[
 2C_0\eps^{2/3}\le  \frac{1}{\dis{\min_{1\le k \le N} |m_k|}}. 
\]
Integrating with respect to $t$, we have 
\[
 \sum_{j \in I_N} \|S_j v_{jl}(t)\|_{L^2}^2
 \le 
 C\eps^2 + C\eps^2 (1+t)^{2\gamma/3}
 \le 
 C\eps^2 (1+t)^{2\gamma/3},
\]
whence
\[
 \sum_{j \in I_N} \sum_{l=0}^{1}\|\pa_x^{3-l} J_{m_j}^l u_j(t)\|_{L^2}^2
 \le 
e^{C\eps^{-2/3}\|u(t)\|_{H^1}^2}
\sum_{j \in I_N}\sum_{l=0}^{1}\|S_j v_{jl}(t)\|_{L^2}^2
 \le 
C\eps^2(1+t)^{2\gamma/3},
\]
as required. 
\qed
\subsection{Estimates for $\alpha_j$}  \label{subsec_pointwise}
In the second part, we are going to show 
$\jb{\xi}^2 |\alpha (t,\xi)| \le C\eps$ 
for $(t,\xi) \in [0,T)\times \R$  
under the assumption \eqref{est_before}. 
If $t\in [0,1]$, the Sobolev imbedding yields 
this estimate immediately. 
Hence we have only to consider the case of $t \in [1,T)$. 
We set 
\[
 \rho_j(t,\xi)=
\mathcal{F}_{m_j} \mathcal{U}_{m_j}^{-1} \big[F_j(u,\pa_x u) \bigr]
-\frac{1}{t}p_j(\xi;\alpha(t,\xi))
\]
and $\rho=(\rho_j)_{j \in I_N}$, so that  
\begin{align}
 i\pa_t \alpha_j(t,\xi)
 &=
 \mathcal{F}_{m_j} \mathcal{U}_{m_j}^{-1}
 \bigl[ \mathcal{L}_{m_j} u_j \bigr]
 =
 \mathcal{F}_{m_j} \mathcal{U}_{m_j}^{-1}\bigl[ F_j(u,\pa_x u) \bigr]
 \nonumber\\
 &=
 \frac{1}{t}p_j(\xi;\alpha(t,\xi)) +\rho_j(t,\xi).
 \label{profile}
\end{align}
By Proposition \ref{prp_main}, we have 
\begin{align*}
|\rho_j(t,\xi)|
 &\le 
 \frac{C}{\jb{\xi}^2}\sum_{l=0}^{2}
 \bigl| (im_j\xi)^l \rho_j(t,\xi) \bigr|\\
 &=
  \frac{C}{\jb{\xi}^2}\sum_{l=0}^{2}
 \left| 
  \mathcal{F}_{m_j} \mathcal{U}_{m_j}^{-1} \big[\pa_x^l F_j(u,\pa_x u) \bigr]
  -\frac{(im_j\xi)^l}{t}p_j(\xi;\alpha(t,\xi))
 \right|\\
 &\le
   \frac{C}{\jb{\xi}^2} \cdot \frac{C}{t^{5/4}} \left(E (T)
          t^{\frac{\gamma}{3}}\right)^3\\
 &\le
  \frac{C \eps^2}{\jb{\xi}^2 t^{5/4-\gamma}}
\end{align*}
for $t\geq 1$ and $\xi \in \R$, 
which shows that $\rho_j(t,\xi)$ has enough decay rates both in 
$t$ and $\xi$. 
Now we put $\nu(t,\xi)=\sqrt{\jb{\alpha(t,\xi), A\alpha(t,\xi)}_{\C^N}}$, 
where $A$ is the positive Hermitian matrix appearing in the condition (b$_0$). 
Remark that 
\[
 \sqrt{\kappa_*} |\alpha(t,\xi)| 
 \le 
 \nu(t,\xi) 
 \le 
 \sqrt{\kappa^*} |\alpha(t,\xi)|,
\]
where $\kappa_*$ and $\kappa^*$ are the smallest and largest 
eigenvalues of $A$, respectively. 
It follows from (b$_0$) that 
\begin{align*}
 \pa_t \nu(t,\xi)^2
 &=
 2\imagpart \jb{i\pa_t \alpha(t,\xi), A\alpha(t,\xi)}_{\C^N}\\
 &=
 \frac{2}{t}\imagpart \jb{p(\xi;\alpha(t,\xi)), A\alpha(t,\xi)}_{\C^N}
 +
 2\imagpart  \jb{\rho(t,\xi), A\alpha(t,\xi)}_{\C^N}\\
 &\le
 0+ C|\rho(t,\xi)| \nu(t,\xi),
\end{align*}
which leads to 
\begin{align*}
 \nu(t,\xi)
 \le 
  \nu(1,\xi) + {C}\int_1^t |\rho(\tau,\xi)| d\tau
 \le
 \frac{C\eps}{\jb{\xi}^2} 
 + 
 \frac{C\eps^2}{\jb{\xi}^2} 
  \int_1^{\infty}\frac{d\tau}{\tau^{5/4-\gamma}}
 \le
 \frac{C\eps}{\jb{\xi}^2},
\end{align*}
Therefore we have
\[
 \jb{\xi}^2 |\alpha_j(t,\xi)|
 \le 
 C\jb{\xi}^2 \nu(t,\xi)
 \le
 C\eps,
\]
as required.\qed

\section{Proof of the main theorems}  \label{sec_pf_of_thm}

Now we are in a position to prove Theorems \ref{thm_sdge} -- 
\ref{thm_asymp_free}.

\subsection{Proof of Theorem \ref{thm_sdge}}  \label{proof_sdge}

First let us recall the local existence theorem. 
For fixed  $t_0\ge 0$, let us consider the initial value 
problem 
\begin{align}
\left\{\begin{array}{cl}
\mathcal{L}_{m_j} u_j=F_j(u,\pa_x u), & t>t_0,\ x\in \R,\ j \in I_N,\\
u_j(t_0,x)=\psi_j(x), & x \in \R,\ j \in I_N.
\end{array}\right.
\label{ivp_shift}
\end{align}

\begin{lem} \label{lem_local}
Let  $\psi=(\psi_j)_{j\in I_N} \in H^3\cap H^{2,1}$. 
There exists a positive constant $\eps_0$, which is independent of $t_0$, 
such that the following holds: 
for any $\underline{\eps} \in (0,\eps_0)$ and $M\in (0,\infty)$, 
one can choose a positive constant $\tau^*=\tau^{*}(\underline{\eps},M)$, 
which is independent of $t_0$, 
such that \eqref{ivp_shift} admits a unique solution 
$u=(u_j)_{j\in I_N} \in C([t_0,t_0+\tau^*]; H^3\cap H^{2,1})$, 
provided that 
\[
 \|\psi\|_{H^{1}}  \le \underline{\eps}
 \quad  \mbox{and}\quad  
 \sum_{l=0}^{1} \sum_{j \in I_N}
 \Bigl\|\bigl(x+i\frac{t_0}{m_j}\pa_x \bigr)^l\psi_j \Bigr\|_{H^{3-l}} 
 \le M. 
\]
\end{lem}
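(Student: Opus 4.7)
The plan is to prove Lemma \ref{lem_local} by a Picard-type iteration in a function space whose norm controls exactly the two quantities $\|u_j(t)\|_{H^3}$ and $\|J_{m_j}(t) u_j(t)\|_{H^2}$, where $J_{m_j}(t) = x + i\frac{t}{m_j}\pa_x$ is the operator already introduced in Section \ref{sec_prelim}. At $t=t_0$ this operator coincides with the one appearing in the hypothesis, so the assumption reads $N(t_0) := \sum_j (\|\psi_j\|_{H^3} + \|J_{m_j}(t_0)\psi_j\|_{H^2}) \le CM$. Because the equation $\mathcal{L}_{m_j} u_j = F_j(u, \pa_x u)$ is autonomous in $t$ and $[\mathcal{L}_{m_j}, J_{m_j}(t)] = 0$ holds at every time, every energy identity I would write refers only to $N(t)$ and $\|u(t)\|_{H^1}$ and never to $t_0$ itself; hence the local lifespan $\tau^*$ obtained below will automatically be independent of $t_0$.

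The core work is a closed a priori estimate for $N(t)$. The $L^2$ bounds on $u_j$ and on $J_{m_j}(t) u_j$ follow by differentiating the identity $\mathcal{L}_{m_j}(J_{m_j} u_j) = J_{m_j} F_j(u,\pa_x u)$, expanding the right-hand side by Lemma \ref{lemma_J1}, and applying a standard energy argument. The higher-order bounds on $v_{jl} := \pa_x^{3-l} J_{m_j}^l u_j$ for $l\in\{0,1\}$ meet a genuine derivative loss from the terms $(\pa_x u)\cdot \pa_x v_{kl}$ and $(\pa_x u)\cdot \overline{\pa_x v_{kl}}$ arising in $\pa_x^{3-l} J_{m_j}^l F_j$. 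I would handle this exactly as in Section \ref{sec_apriori}: apply Lemma \ref{lemma_smoothing} with weight $\Phi = \eta(|u|^2 + |\pa_x u|^2)$ to get a $\frac{1}{|m_j|}\int \Phi |S_j|\pa_x|^{1/2} v_{jl}|^2\,dx$ gain on the left, split $\pa_x^{3-l} J_{m_j}^l F_j$ into the loss terms plus a cubic remainder $h_{jl}$ controlled by $\|u\|_{W^{2,\infty}}^2 \cdot N$, and then invoke Lemma \ref{lemma_aux2} to bound the loss terms by $\frac{C}{\eta} e^{C\eta\|u\|_{H^1}^2}\int \Phi |S_k |\pa_x|^{1/2} v_{kl}|^2\,dx$ plus lower-order pieces.

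With $\|\psi\|_{H^1} \le \underline{\eps}$ small, I would choose $\eta$ to be a fixed large constant (e.g.\ $\eta = \underline{\eps}_0^{-1}$) so that on a short time interval $\|u(t)\|_{H^1} \le 2\underline{\eps}$ stays small and $\eta \|u\|_{H^1}^2$ remains bounded. Then the gain factor $\frac{C}{\eta}e^{C\eta \underline{\eps}^2}$ can be made strictly less than $\frac{1}{2 \max_k |m_k|}$ by taking $\underline{\eps} \le \eps_0$ for an absolute $\eps_0$ depending only on the constants in Lemmas \ref{lemma_smoothing} and \ref{lemma_aux2}; the loss term is absorbed into the smoothing gain, and Gronwall closes the inequality on $\sum_{j,l}\|S_j v_{jl}(t)\|_{L^2}^2$. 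Since $\|S_j^{\pm 1}\|_{L^2\to L^2} \le e^{C\eta\underline{\eps}^2} \le C$, this yields $N(t)\le 2M$ on an interval $[t_0, t_0+\tau^*]$ whose length depends only on $(\underline{\eps}, M)$. The iteration scheme itself is then standard: linearize by $\mathcal{L}_{m_j} u_j^{(n+1)} = F_j(u^{(n)}, \pa_x u^{(n+1)})$, verify the uniform bound $N^{(n+1)}(t) \le 2M$ on $[t_0,t_0+\tau^*]$ by the same estimates with $w=u^{(n)}$, and prove contraction in a weaker norm such as $C([t_0,t_0+\tau^*]; L^2)$ by an energy estimate on the difference (which is subquadratic and loses no derivatives after one more smoothing application). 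Uniqueness in $C([t_0,t_0+\tau^*]; H^3\cap H^{2,1})$ follows from the same difference estimate.

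The main obstacle is reconciling the competing smallness requirements: $\eta$ must be large enough to beat the factor $|m_j|^{-1}$ in Lemma \ref{lemma_smoothing}, but this simultaneously inflates $\|S_{\Phi,m_j}\|_{L^2\to L^2} \le e^{C\eta\underline{\eps}^2}$ and the coefficient $e^{C\eta\|u\|_{H^1}^2}$ appearing in the lower-order part of Lemma \ref{lemma_aux2}. Both are tamed only if $\eta \underline{\eps}^2$ is controlled, which forces the explicit threshold $\eps_0$ on the $H^1$-smallness and explains why that threshold, but \emph{not} the higher norm bound $M$, must be absolute and independent of $t_0$.
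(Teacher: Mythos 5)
First, a caveat: the paper does not prove Lemma \ref{lem_local} at all --- it declares the result standard and points to the Appendix of \cite{IKS} --- so your proposal can only be judged against what such a proof must contain. The parts of your outline that carry the real analytic content are correct and coincide with the machinery the paper deploys in Section \ref{sec_apriori}: the norm to propagate is $\sum_{j,l}\|\pa_x^{3-l}J_{m_j}^l u_j\|_{L^2}$ together with $\|u\|_{H^1}$; the derivative loss at top order is beaten by Lemma \ref{lemma_smoothing} with $\Phi=\eta(|u|^2+|\pa_x u|^2)$ and absorption of the loss terms of Lemma \ref{lemma_aux2} into the smoothing gain; and the two smallness requirements (take $\eta$ large to dominate $|m_j|^{-1}$, keep $\eta\|u\|_{H^1}^2$ bounded so that $e^{C\eta\|u\|_{H^1}^2}$ and $\|S_{\Phi,m}^{\pm1}\|_{L^2\to L^2}$ stay $O(1)$) are exactly what forces an absolute threshold $\eps_0$ on $\|\psi\|_{H^1}$ while the higher norm only enters through $M$. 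Your explanation of the $t_0$-independence --- the hypothesis is phrased in terms of $\|(x+i\frac{t_0}{m_j}\pa_x)^l\psi_j\|_{H^{3-l}}$, the equation is autonomous, and $[\mathcal{L}_{m_j},J_{m_j}]=0$ at all times --- is precisely why the lemma is stated with that hypothesis.

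The gap is in the construction step. The scheme $\mathcal{L}_{m_j}u_j^{(n+1)}=F_j(u^{(n)},\pa_x u^{(n+1)})$ is not a linear problem for $u^{(n+1)}$: the nonlinearity may be cubic in the derivative arguments (e.g.\ $(\pa_x u_1)(\pa_x u_2)(\pa_x u_3)$ is admissible under (a)), so ``putting the derivatives at level $n+1$'' reproduces a nonlinear equation, while the fully explicit scheme $F_j(u^{(n)},\pa_x u^{(n)})$ loses one derivative per iteration and does not close in $H^3$. The correct linearization freezes the coefficients, i.e.\ keeps only one factor of $\pa_x u^{(n+1)}_k$ or $\overline{\pa_x u^{(n+1)}_k}$ implicit with quadratic coefficients evaluated at $u^{(n)}$; but the resulting variable-coefficient linear Schr\"odinger system is itself solved only via the smoothing operator plus a regularization (parabolic or mollifier) and a limiting argument using your uniform bound. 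This is the actual content of the ``standard'' proof in \cite{IKS} and cannot be replaced by a bare Picard iteration. Relatedly, contraction in $C([t_0,t_0+\tau^*];L^2)$ is too optimistic: the difference of consecutive iterates satisfies an equation whose source contains quadratic coefficients times $\pa_x d^{(n)}$, so estimating $d^{(n+1)}$ in $L^2$ costs the $H^1$ norm of $d^{(n)}$; one must either work in $H^1$ (interpolating against the uniform $H^3$ bound) or use the usual ``bounded in the high norm, Cauchy in a low norm'' argument. Uniqueness of genuine $H^3$ solutions, by contrast, does follow from one more application of the smoothing estimate to the difference, as you indicate.
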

We omit the proof of this lemma because it is standard 
(see e.g., Appendix of \cite{IKS} for the proof of similar lemma 
in the quadratic nonlinear case).
\\

Now we are going to prove the global existence by the so-called 
bootstrap argument. 
Let $T^*$ be the supremum of all $T \in (0,\infty]$ such that 
the problem \eqref{nls_system} admits a unique solution 
$u \in C([0,T);H^3\cap H^{2,1})$. 
By Lemma \ref{lem_local} with $t_0=0$, we have $T^*>0$ if 
$\|\varphi\|_{H^1}\le \eps< \eps_0$. 
We also set 
\[
 T_{*}=\sup \bigl\{ \tau \in [0,T^*)\, |\, E(\tau)\le \eps^{2/3} \bigr\}.
\]
Note that $T_*>0$ because of the continuity of 
$[0,T^*) \ni \tau \mapsto E(\tau)$ and 
$\|\varphi\|_{H^3\cap H^{2,1}}=\eps\le \frac{1}{2}\eps^{2/3}$ 
if $\eps\le 1/8$. 

We claim that $T_*=T^{*}$ if $\eps$ is small enough. Indeed, if 
$T_{*}<T^*$, Lemma \ref{lem_apriori} with $T=T_*$ yields 
\[
 E(T_*) \le K\eps \le \frac{1}{2}\eps^{2/3}
\] 
for $\eps\le \eps_2:=\min\{\eps_1, 1/(2K)^3\}$, 
where $K$ and $\eps_1$ are mentioned in Lemma \ref{lem_apriori}. 
By the continuity of $[0,T^*)\ni \tau \mapsto E(\tau)$, we can take 
$T^{\flat} \in (T_*,T^*)$ such that 
$E(T^{\flat})\le \eps^{2/3}$, which contradicts the 
definition of $T_*$. Therefore we must have $T_*=T^*$. 
By using Lemma \ref{lem_apriori} with $T=T^*$ again, we see that 
\[
  \sum_{l=0}^{1}\sum_{j \in I_N}\|J_{m_j}^{l} u_j(t,\cdot)\|_{H^{3-l}}
  \le 
  K\eps (1+t)^{\frac{\gamma}{3}}, 
 \qquad  
 \sum_{j \in I_N}\sup_{\xi \in \R} \Bigl(\jb{\xi}^2 |\alpha_j(t,\xi)|\Bigr)
 \le K\eps
\]
for $t\in [0,T^*)$. In particular we have
\[
 \sup_{t\in [0,T^*)}\|u(t)\|_{H^1} 
 \le 
  C\sup_{(t,\xi)\in [0,T^*) \times \R} \Bigl(\jb{\xi}^2 |\alpha(t,\xi)|\Bigr)
 \le 
 C^{\flat}\eps
\]
with some $C^{\flat}>0$. 

Next we assume $T^*<\infty$. Then, by setting 
$\eps_3=\min\{\eps_2, \eps_0/2C^{\flat}\}$ 
and $M=K\eps_3 (1+T^*)^{\gamma/3}$, we have 
\[
  \sup_{t\in [0,T^*)} \sum_{l=0}^{1} \sum_{j\in I_N} 
 \|J_{m_j}^{l} u_j(t,\cdot)\|_{H^{3-l}} \le M
\]
as well as 
\[
 \sup_{t\in [0,T^*)}\|u(t)\|_{H^1} \le \eps_0/2<\eps_0
\]
for $\eps \le \eps_3$. By Lemma \ref{lem_local}, 
there exists $\tau^*>0$ such that \eqref{nls_system} admits the solution 
$u\in C([0, T^*+\tau^*);H^{3}\cap H^{2,1})$. This contradicts the definition 
of $T^*$, which means $T^*=+\infty$ for $\eps \in (0,\eps_3]$. 
Moreover, we have
\[
 \|u(t)\|_{L^2} \le C\sup_{\xi \in \R} |\jb{\xi}\alpha(t,\xi)|
 \le C\eps.
\]
By using Lemma \ref{lemma_asympt} and the inequality obtained above,
we also have 
\[
 |u_j(t,x)| \le \frac{C}{t^{1/2}} |\alpha_j(t,\xi)| +\frac{C}{t^{3/4}} 
 \bigl( \|u_j(t)\|_{L^2} + \|J_{m_j}u_j(t)\|_{L^2}\bigr)
 \le
 \frac{C\eps}{t^{1/2}}
\]
for $t\ge 1$ and $j\in I_N$. 
This completes the proof of Theorem \ref{thm_sdge}.
\qed

\subsection{Proof of Theorems \ref{thm_decay1} and \ref{thm_decay2}}  
\label{proof_decay}

The proof of Theorems \ref{thm_decay1} and  \ref{thm_decay2} heavily 
relies on the following lemma due to \cite{KMS}. 
Note that special cases of this lemma have been used previously in 
\cite{KLS} and \cite{KimSu} less explicitly.

\begin{lem}[\cite{KMS}]  \label{lem_mats}
Let $C_0>0$, $C_1\ge 0$, $p>1$ and $q>1$. Suppose that 
$\Psi(t)$ satisfies 
\[
 \frac{d \Psi}{dt}(t) \le \frac{-C_0}{t} |\Psi(t)|^p +\frac{C_1}{t^q}
\]
for $t\ge 2$. Then we have 
\[
 \Psi(t) \le \frac{C_2}{(\log t)^{p^*-1}}
\]
for $t\ge 2$, where $p^*$ is the H\"older conjugate of $p$ 
{\rm(}i.e., $1/p+1/p^*=1${\rm)}, and 
\[
 C_2
 = 
 \left(\frac{p^*}{C_0 p} \right)^{p^*-1}
 +
 (\log 2)^{p^*-1}\Psi(2) 
 + 
 \frac{C_1}{\log 2}
 \int_2^{\infty} \frac{(\log \tau)^{p^*}}{\tau^{q}}d\tau.
\]
\end{lem}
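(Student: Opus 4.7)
The plan is to introduce the rescaled quantity $u(t) := \Psi(t)(\log t)^{p^*-1}$ and show that $u(t) \le C_2$ for all $t \ge 2$; since the target bound on $\Psi$ is trivially satisfied wherever $\Psi(t) \le 0$ (the right-hand side being positive for $t \ge 2$), we may restrict attention to subintervals of $[2,\infty)$ on which $\Psi > 0$. Using the identity $p(p^*-1) = p^*$, a direct computation converts the hypothesis on $\Psi$ into the differential inequality
\[
 u'(t) \;\le\; -\frac{C_0\, u(t)^p - (p^*-1)\, u(t)}{t \log t} + \frac{C_1 (\log t)^{p^*-1}}{t^q}.
\]

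Next, set $M := (p^*/(C_0 p))^{p^*-1}$, which satisfies the algebraic identity $C_0 M^{p-1} = p^*-1$. The crucial observation is that $C_0 u^p - (p^*-1) u \ge 0$ whenever $u \ge M$, so on the region $\{u \ge M\}$ the pointwise inequality reduces to $u'(t) \le C_1(\log t)^{p^*-1}/t^q$. Because $q > 1$, this upper bound is integrable on $[2,\infty)$, and using $(\log \tau)^{p^*-1} \le (\log \tau)^{p^*}/\log 2$ for $\tau \ge 2$ yields
\[
 R \;:=\; \int_{2}^{\infty} \frac{C_1(\log \tau)^{p^*-1}}{\tau^q}\, d\tau
 \;\le\; \frac{C_1}{\log 2}\int_{2}^{\infty} \frac{(\log \tau)^{p^*}}{\tau^q}\, d\tau,
\]
which is precisely the perturbation contribution appearing in the stated $C_2$.

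To finish, fix $t \ge 2$ with $\Psi(t) > 0$ and let $s_0 \in [2,t]$ be the left endpoint of the maximal subinterval ending at $t$ on which $\Psi$ stays positive, so that either $s_0 = 2$ with $u(s_0) \le (\log 2)^{p^*-1}\Psi(2)$, or $s_0 > 2$ with $u(s_0) = 0$. I would then argue by cases according to the position of $u$ relative to $M$. If $u \le M$ throughout $[s_0,t]$, then $u(t) \le M \le C_2$ at once. Otherwise, set $s_* := \sup\{\sigma \in [s_0, t] : u(\sigma) \le M\}$: if this set is nonempty and $s_* < t$, continuity forces $u(s_*) = M$ (or $u(s_*) = 0$ at $s_* = s_0$) and $u > M$ on $(s_*, t]$, so integrating the simplified bound over $(s_*,t]$ gives $u(t) \le M + R$; if the set is empty, integration over $[s_0,t]$ yields $u(t) \le u(s_0) + R \le (\log 2)^{p^*-1}\Psi(2) + R$. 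In every case $u(t) \le M + (\log 2)^{p^*-1}\Psi(2) + R = C_2$, which is exactly the desired bound on $\Psi$.

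The only genuine obstacle is the \emph{critical} character of the comparison: the constant $M$ is precisely that of a self-similar solution of the unperturbed equation $y' = -C_0 y^p/t$ in the rescaled variable, so a naive strict supersolution of the form $C(\log t)^{-(p^*-1)}$ with $C$ slightly larger than $M$ does not leave pointwise slack to absorb the perturbation $C_1/t^q$. The barrier-plus-integration argument above bypasses this by exploiting one-sided monotonicity on $\{u \ge M\}$ rather than attempting to dominate pointwise on all of $[2,\infty)$.
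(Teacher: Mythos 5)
The paper itself does not prove Lemma \ref{lem_mats}; it is imported verbatim from the reference [KMS], so there is no in-text proof to compare against. Your argument is a correct, self-contained proof, and the key computations all check out: the rescaling $u=\Psi(\log t)^{p^*-1}$ together with $p(p^*-1)=p^*$ does yield $u'\le -\bigl(C_0u^p-(p^*-1)u\bigr)/(t\log t)+C_1(\log t)^{p^*-1}t^{-q}$ on intervals where $\Psi>0$; the threshold $M=(p^*/(C_0p))^{p^*-1}$ does satisfy $C_0M^{p-1}=p^*-1$ (since $(p^*-1)(p-1)=1$), so the cubic drift term is nonnegative once $u\ge M$; and $(\log\tau)^{p^*-1}\le(\log\tau)^{p^*}/\log 2$ for $\tau\ge 2$ recovers exactly the perturbation term in $C_2$. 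The exit-time/one-sided-monotonicity argument on $\{u\ge M\}$ then correctly delivers $u(t)\le M+(\log 2)^{p^*-1}\Psi(2)+R$ in each case, and your closing remark about why a strict pointwise supersolution fails at this critical scaling is apt. The one point you gloss over is a sign issue: your reduction ``the bound is trivial wherever $\Psi(t)\le 0$'' and the final consolidation of the three case bounds into the single constant $C_2$ both implicitly require $\Psi(2)\ge 0$ (equivalently $C_2\ge 0$ and $M\le C_2$), which the lemma as stated does not assume. This is harmless for the paper, where the lemma is applied to $\Psi(t)=\langle\alpha(t,\xi),A\alpha(t,\xi)\rangle_{\C^N}\ge 0$, but if you want the lemma exactly as stated you should either add the hypothesis $\Psi(2)\ge 0$ or note explicitly that the negative-$\Psi(2)$ case needs a separate (easy) treatment.
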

\vspace{3mm}

With $\xi \in \R$ fixed, we set 
$\Psi(t)=\jb{\alpha(t,\xi), A\alpha(t,\xi)}_{\C^N}$, 
where $A$ is the positive Hermitian matrix appearing in the condition (b$_1$). 
Then we deduce from \eqref{profile} that $\Psi$ satisfies
\[
  \frac{d \Psi}{dt}(t) 
 \le 
 \frac{-2C_{*}}{t} |\alpha(t)|^4 
 +
 C|\rho(t,\xi)| |\alpha(t,\xi)|
 \le 
 \frac{-2C_{*}/\kappa_*^2}{t} |\Psi(t)|^2 
 +
 \frac{C\eps^3}{\jb{\xi}^4 t^{5/4-\gamma}}
\]
for $t\ge 2$, where $C_*$ is the positive constant appearing in 
the condition (b$_1$) and $\kappa_*$ is the smallest eigenvalue of $A$. 
We also have $\Psi(2)\le C|\alpha(2,\xi)|^2\le C\eps^2 \jb{\xi}^{-4}$. 
So we can apply Lemma \ref{lem_mats} with $p=2$, $q=5/4-\gamma$ to obtain  
\[
 |\alpha(t,\xi)|^2
 \le 
 C \Psi(t)
 \le 
 \frac{1}{(\log t)^{2-1}}\left(  
  \frac{\kappa_*^2}{2C_*} + \frac{C\eps^2}{\jb{\xi}^4}
 \right)
 \le 
 \frac{C}{\log t}.
\]
From Lemma \ref{lemma_asympt} it follows that 
\begin{align*}
 |u_j(t,x)| 
 &\le  
 \frac{C}{t^{1/2}} \sup_{\xi \in \R}|\alpha_j(t,\xi)| 
 +
 \frac{C}{t^{3/4}} \bigl( \|u_j(t)\|_{L^2} + \|J_{m_j}u_j(t)\|_{L^2}\bigr)\\
 &\le
 \frac{C}{(t \log t)^{1/2}} +\frac{C\eps}{t^{3/4-\gamma /3}} \\
 &\le
 \frac{C}{(t \log t)^{1/2}},
\end{align*}
for $t\ge 2$, $x \in \R$ and $j\in I_N$. 
On the other hand, we already know that  
$|u(t,x)| \le C \eps(1+t)^{-1/2}$ for $t\geq 0$. Hence we arrive at 
\[
 (1+t)(1+\eps^2 \log (t+2))|u(t,x)|^2 \le C \eps^2
\]
for $t\geq 0$, 
which implies the desired pointwise decay estimate. 
By the Fatou lemma we also have 
\[
 \limsup_{t\to +\infty} \|\alpha_j(t)\|_{L^2}^2
 \le 
 \int_{\R} \limsup_{t\to +\infty} |\alpha_j(t,\xi)|^2 d\xi 
 =0,
\]
which leads to  decay of $\|u_j(t)\|_{L^2}$ as $t\to +\infty$, 
as stated in Theorem \ref{thm_decay1}.

Under the stronger condition (b$_2$), we have 
\[
  \frac{d \Psi}{dt}(t) 
 \le 
 \frac{-2C_{**}\jb{\xi}^2/\kappa_*^2}{t} |\Psi(t)|^2 
 +
 \frac{C\eps^3}{\jb{\xi}^4 t^{5/4-\gamma}}
\]
for $t\ge 2$. Therefore Lemma \ref{lem_mats} again yields 
\[
 |\alpha(t,\xi)|^2 
 \le 
 \frac{1}{\log t}\left(  
  \frac{\kappa_*^2}{2C_{**}\jb{\xi}^2} + \frac{C\eps^2}{\jb{\xi}^4}
 \right)
 \le 
  \frac{C }{\jb{\xi}^2  \log t},
\]
whence 
\[
 \|u(t)\|_{L^2} =\|\alpha(t)\|_{L^2}
 \le 
 C \sup_{\xi \in \R} \bigl(\jb{\xi} |\alpha(t,\xi)| \bigr)
 \le 
 \frac{C}{\sqrt{\log t}}
\]
for $t\ge 2$. This yields Theorem \ref{thm_decay2}.
\qed

\subsection{Proof of Theorem \ref{thm_asymp_free}}  \label{proof_tasymp_free}
For given $\delta>0$, we set $\gamma=\min\{\delta, 1/5\} \in (0,1/4)$. 
Remember that we have already shown that 
\[
 |\alpha_j(t,\xi)| \le \frac{C\eps}{\jb{\xi}^2},
 \qquad
 |\rho_j(t,\xi)| 
 \le 
 \frac{C \eps^2}{\jb{\xi}^2 t^{5/4-\gamma}}
\]
for $t\ge 1$, $\xi \in \R$ and $j\in I_N$. 
These estimates allow us to define 
$\alpha^+=(\alpha_j^+)_{j\in I_N} \in L^2\cap L^{\infty}$ by 
\[
 \alpha_j^+(\xi):=\alpha_j(1,\xi) -i \int_1^{\infty} \rho_j(t',\xi) dt'.
\]
On the other hand, the condition (b$_3$) and \eqref{profile} lead to 
\[
 \alpha_j(t,\xi)=\alpha_j(1,\xi) -i \int_1^{t}\rho_j(t',\xi) dt',
\]
whence
\[
 \|\alpha_j(t) -\alpha_j^+ \|_{L^{2}\cap L^{\infty}}
 \le 
 \int_t^{\infty} \|\rho_j(t',\cdot) \|_{L^{2}\cap L^{\infty}} dt'
 \le 
 C\eps^2 t^{-1/4+{\gamma}}.
\]
Now we set 
$\varphi_j^+:=\mathcal{F}_{m_j}^{-1} \alpha_j^+$. 
Then we have
\begin{align*}
 \|u_j(t) -\mathcal{U}_{m_j}\varphi_j^+\|_{L^2}
 &=
 \|
 \mathcal{F}_{m_j}\mathcal{U}_{m_j}^{-1}u_j(t) -\mathcal{F}_{m_j}\varphi_j^+
 \|_{L^2}\\
 &=
 \|\alpha_j(t) -\alpha_j^+ \|_{L^2}\\
 &\le 
 C\eps^2 t^{-1/4+\gamma}.
\end{align*}
By Lemma \ref{lemma_asympt} and the inequality obtained above, we also have
\begin{align*}
 &\|
   u_j(t) -\mathcal{M}_{m_j} \mathcal{D}\mathcal{F}_{m_j}\varphi_j^+
 \|_{L^{\infty}}\\
 &\le
 \|
   u_j(t) -\mathcal{M}_{m_j} \mathcal{D}\mathcal{F}_{m_j}\mathcal{U}_{m_j}^{-1}
  u_j(t)
 \|_{L^{\infty}} 
 +
  \|
   \mathcal{M}_{m_j} \mathcal{D}(\alpha_j(t)-\alpha^+)
 \|_{L^{\infty}}\\
 &\le
 Ct^{-3/4} (\|u_j(t)\|_{L^2}+\|J_{m_j}u_j(t)\|_{L^2} )
 + 
 Ct^{-1/2} \|\alpha_j(t)-\alpha_j^+\|_{L^{\infty}}\\
 &\le
 C\eps t^{-3/4+\gamma/3} + C\eps^2 t^{-1/2 -1/4+\gamma}\\
 &\le 
 C\eps t^{-3/4+\delta}
\end{align*}
for $t\geq 1$. 
\qed

\begin{rmk} \label{rmk_opt}
We put $\varphi_j=\eps' \psi_j$ with $\psi_j \not\equiv 0$ and 
$\eps' \in (0,\eps^*]$, 
where $\eps^*>0$ is chosen suitably small so that  
Theorem \ref{thm_asymp_free} is valid. Then we can check that  
the corresponding $\varphi_j^+$ satisfies 
\[
 \|\varphi_j^+\|_{L^2}
 =
 \|\alpha_j^+\|_{L^2} 
 \ge
 \eps' \|\psi_j\|_{L^2} - C^* (\eps')^3
\]
with some $C^* >0$. Therefore $\varphi_j^+$ does not identically vanish 
if $\eps'< \min\{\eps^*, \sqrt{\|\psi_j\|_{L^2} / C^*}\}$. 

\end{rmk}

\appendix \section{Proof of Lemma~\ref{lemma_smoothing}} \label{appendix}

In this appendix, we shall give the proof of Lemma~\ref{lemma_smoothing} 
in the similar way as Section 2 of \cite{HNP} with slight modifications.
We first state the following useful lemma without proof, which is a 
special case of Lemma 2.1 of \cite{HNP}. 

\begin{lem} \label{lemma_commutator}
We have 
\[
 \left\| \Bigl[|\pa_x|^{1/2}, g \Bigr] f \right\|_{L^2}
 +
 \left\| \Bigl[|\pa_x|^{1/2}\mathcal{H}, g \Bigr] f \right\|_{L^2}
 \le 
 C \|g\|_{W^{1,\infty}} \|f\|_{L^2}.
\]
\end{lem}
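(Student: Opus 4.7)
The plan is to represent each commutator as a singular integral operator in physical space and estimate it by Young's inequality, exploiting the fact that the difference $g(x)-g(y)$ cancels the worst singularity of the kernel.

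First I would use the standard representation
\[
 |\pa_x|^{1/2} h(x) = c_0 \int_\R \frac{h(x) - h(y)}{|x-y|^{3/2}}\, dy,
\]
valid for Schwartz $h$ (verified by a direct Fourier computation from the symbol $|\xi|^{1/2}$), together with the analogous principal-value representation for $|\pa_x|^{1/2}\mathcal{H}$ whose kernel carries an extra $\mathrm{sgn}(x-y)$ factor (this reflects the odd multiplier $-i\,\mathrm{sgn}(\xi)|\xi|^{1/2}$). A short telescoping computation then yields
\[
 [|\pa_x|^{1/2}, g]\, f(x) = c_0 \int_\R \frac{(g(x) - g(y))\, f(y)}{|x-y|^{3/2}}\, dy,
\]
and an identical formula with the extra $\mathrm{sgn}(x-y)$ factor for $[|\pa_x|^{1/2}\mathcal{H}, g]\,f$.

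Next I would split the $y$-integral at $|x-y|=1$. On $\{|x-y|<1\}$ the mean value theorem gives $|g(x)-g(y)|\le \|\pa_x g\|_{L^\infty}\,|x-y|$, so the effective convolution kernel is dominated by $\|\pa_x g\|_{L^\infty}\,|x-y|^{-1/2}\chi_{\{|x-y|<1\}}$, which lies in $L^1(\R)$. On $\{|x-y|\ge 1\}$ one uses $|g(x)-g(y)|\le 2\|g\|_{L^\infty}$, giving the kernel $2\|g\|_{L^\infty}\,|x-y|^{-3/2}\chi_{\{|x-y|\ge 1\}}$, again in $L^1(\R)$. Young's convolution inequality then produces
\[
 \bigl\|[|\pa_x|^{1/2}, g]\, f\bigr\|_{L^2} \le C\bigl(\|\pa_x g\|_{L^\infty}+\|g\|_{L^\infty}\bigr)\|f\|_{L^2}\le C\|g\|_{W^{1,\infty}}\|f\|_{L^2},
\]
and exactly the same split bounds $\bigl\|[|\pa_x|^{1/2}\mathcal{H}, g]\, f\bigr\|_{L^2}$, since the extra $\mathrm{sgn}(x-y)$ factor is irrelevant for absolute-value estimates.

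The only subtle point is ensuring that the principal-value kernel associated with $|\pa_x|^{1/2}\mathcal{H}$ does not cause trouble near the diagonal. However, once the commutator structure is extracted, the factor $g(x)-g(y)$ vanishes linearly on the diagonal, leaving an absolutely convergent integral after the Lipschitz bound on $g$; no genuine principal-value cancellation is needed. I therefore expect no serious obstacle beyond this justification, as the whole argument is a one-dimensional specialization of standard Calder\'on-type commutator estimates for the half-derivative and its Hilbert-transform conjugate.
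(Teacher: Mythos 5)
Your argument is correct, but note that the paper itself offers no proof to compare against: Lemma~\ref{lemma_commutator} is stated ``without proof'' as a special case of Lemma~2.1 of \cite{HNP}, so your kernel-based derivation is a self-contained substitute rather than a variant of an argument in the text. The substance is sound: the representation $|\pa_x|^{1/2}h(x)=c_0\int_{\R}\frac{h(x)-h(y)}{|x-y|^{3/2}}\,dy$ is the standard one for $(-\pa_x^2)^{1/4}$ (exponent $n+2s=3/2$ with $n=1$, $s=1/4$), the telescoping does produce the kernel $c_0\,\frac{(g(x)-g(y))}{|x-y|^{3/2}}$ for the commutator, and the split at $|x-y|=1$ together with the Lipschitz bound near the diagonal and the $L^\infty$ bound far away gives an $L^1$ majorant of the kernel with norm $\le C\|g\|_{W^{1,\infty}}$, so Young's inequality (equivalently Schur's test) closes the estimate; the $\mathrm{sgn}(x-y)$ factor for the odd multiplier $-i\,\mathrm{sgn}(\xi)|\xi|^{1/2}$ indeed plays no role once absolute values are taken. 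Two points deserve explicit care in a written-up version. First, for $|\pa_x|^{1/2}\mathcal{H}$ the kernel $\mathrm{sgn}(z)|z|^{-3/2}$ is not locally integrable, so the representation of the operator itself (before commuting) must be taken in the regularized form $\int \mathrm{sgn}(x-y)\,\frac{h(x)-h(y)}{|x-y|^{3/2}}\,dy$, which converges absolutely only when $h$ is bounded and Lipschitz; you should therefore run the telescoping for Schwartz $f$ (so that $gf$ is bounded Lipschitz when $g\in W^{1,\infty}$) and only then extend to $f\in L^2$ by density, which is legitimate precisely because the final bound involves only $\|f\|_{L^2}$. Second, the identification of the constants $c_0,c_1$ (i.e.\ that these kernels really invert the Fourier symbols $|\xi|^{1/2}$ and $-i\,\mathrm{sgn}(\xi)|\xi|^{1/2}$) should be at least referenced, though it is classical. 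With those details supplied, your proof is complete and arguably more elementary and transparent than an appeal to the general Calder\'on-commutator machinery.
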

\vspace{5mm}

\noindent{\it Proof of Lemma~\ref{lemma_smoothing}.}\ \ 
As in the standard energy method, we compute
\[
 \frac{1}{2}\frac{d}{dt} \|Sv\|_{L^2}^2
 =\imagpart\jb{\mathcal{L}_m S v, Sv}_{L^2}
 =\imagpart\jb{S \mathcal{L}_m v, Sv}_{L^2}
  + 
  \imagpart\JB{[\mathcal{L}_m, S]v, Sv}_{L^2}.
\]
We also note that 
\[
 [\mathcal{L}_m,S]v=-\frac{i}{|m|}\Phi S |\pa_x|v+Q,
\]
where
\[
 Q
 =
 \frac{1}{2m}\Phi^2 Sv  
 -
 \frac{i}{2|m|}(\pa_x \Phi) S\mathcal{H}v
 +
 \mathrm{sgn}(m)
 \left(\int_{-\infty}^{x} \pa_t \Phi(t,y)dy\right) S\mathcal{H}v.
\]
Remark that $|\pa_x|=\mathcal{H}\pa_x=\pa_x \mathcal{H}$, 
$\mathcal{H}^2=-1$, and that $\mathcal{H}$ is $L^2$-bounded. 
Now we set $w_{k}^{(l)}=\pa_x^l w_k$ for $l \in \mathbb{Z}_{\ge 0}$. 
Then, since 
\begin{align*}
 \pa_t\Phi
 &=
 2\eta \sum_{l=0}^{1} \sum_{k \in I_N} 
 \imagpart \Bigl\{(i\pa_t w_{k}^{(l)})\overline{w_{k}^{(l)}}\Bigr\}\\
 &=
  2\eta \sum_{l=0}^{1} \sum_{k \in I_N} 
 \imagpart
 \left\{ \biggl(-\frac{1}{2\mu_k}\pa_x^2 w_{k}^{(l)} 
  + \pa_x^l \mathcal{L}_{\mu_k}w_k \biggr) 
 \overline{w_{k}^{(l)}} \right\}\\
 &=
  2\eta \sum_{l=0}^{1} \sum_{k \in I_N} 
 \imagpart
 \left\{ \pa_x 
 \biggl(-\frac{1}{2\mu_k}(\pa_x w_{k}^{(l)})\overline{w_{k}^{(l)}} \biggr)
 +\frac{1}{2\mu_k} \bigl| \pa_x w_{k}^{(l)} \bigr|^2 
  + (\pa_x^l \mathcal{L}_{\mu_k}w_k) \overline{w_{k}^{(l)}} 
\right\}\\
 &=
  2\eta \sum_{l=0}^{1} \sum_{k \in I_N} 
 \imagpart
 \left\{ \pa_x 
 \biggl(-\frac{1}{2\mu_k}(\pa_x w_{k}^{(l)})\overline{w_{k}^{(l)}} \biggr) 
 + (\pa_x^l \mathcal{L}_{\mu_k}w_k) \overline{w_{k}^{(l)}} 
\right\},
\end{align*}
we see that 
\begin{align*}
 \left|\int_{-\infty}^{x} \pa_t \Phi(t,y)dy\right|
 &=
  2\eta\left|
 \sum_{l=0}^{1}\sum_{k\in I_N} \imagpart
 \Bigl\{ 
  -\frac{1}{2 \mu_k}(\pa_x w_{k}^{(l)})\overline{w_{k}^{(l)}}+
  \int_{-\infty}^{x}
  \bigl(\pa_x^l \mathcal{L}_{\mu_k}w_k \bigr) \overline{w_{k}^{(l)}}
  dy
 \Bigr\}
 \right|\\
 &\le
 C\eta \Bigl(\|w\|_{W^{2,\infty}}^2
 + 
 \sum_{k\in I_N} \|\mathcal{L}_{\mu_k}w_k\|_{H^1} \|w_k\|_{H^1} \Bigr).
\end{align*}
Therefore we obtain
\begin{align}
 \frac{d}{dt} \|Sv\|_{L^2}^2
 +
 \frac{2}{|m|}\realpart\jb{\Phi S|\pa_x|v,Sv}_{L^2}
 \le 
 2\bigl| \jb{S \mathcal{L}_m v, Sv}_{L^2} \bigr|
  + 
  CB_1(t)\|Sv\|_{L^2}^2,
 \label{est_smoothing_pre1}
\end{align}
where
\begin{align*}
 B_1(t)
 &=
 e^{C \|\Phi\|_{L^1}}\Bigl(
 \|\Phi\|_{L^{\infty}}^2 + \|\pa_x \Phi\|_{L^{\infty}} 
 + 
 \eta \|w\|_{W^{2,\infty}}^2
 + 
 \eta \sum_{k\in I_N} \|\mathcal{L}_{\mu_k}w_k\|_{H^1} \|w_k\|_{H^1}
 \Bigr).
\end{align*}
Next we observe that 
\begin{align*}
 w_{k}^{(l)} S|\pa_x|v
 &=
 w_{k}^{(l)} S\pa_x \mathcal{H}v\\
 &=
 \pa_x (w_{k}^{(l)} S\mathcal{H}v) + [w_{k}^{(l)} S,\pa_x]\mathcal{H}v\\
 &=
 -|\pa_x|^{1/2}|\pa_x|^{1/2} \mathcal{H} w_{k}^{(l)} S\mathcal{H} v
 + [w_{k}^{(l)} S,\pa_x]\mathcal{H}v\\
 &=
 |\pa_x|^{1/2}\bigl( w_{k}^{(l)} S|\pa_x|^{1/2}v \bigr) 
 +
 [w_{k}^{(l)} S, \pa_x]\mathcal{H}v
  -
 |\pa_x|^{1/2} 
 \Bigl[ |\pa_x|^{1/2}\mathcal{H}, w_{k}^{(l)} S \Bigr]\mathcal{H}v,
\end{align*}
which leads to 
\begin{align*}
 \Jb{w_{k}^{(l)}S|\pa_x|v,w_{k}^{(l)}Sv}_{L^2}
 =&
 \Jb{w_{k}^{(l)}S|\pa_x|^{1/2}v, |\pa_x|^{1/2}
 \bigl(w_{k}^{(l)}S v\bigr) }_{L^2}
 +
 \Jb{[w_{k}^{(l)}S,\pa_x]\mathcal{H}v, w_{k}^{(l)}Sv}_{L^2}\\
 & -
 \JB{ \bigl[ |\pa_x|^{1/2} \mathcal{H}, w_{k}^{(l)}S \bigr] \mathcal{H}v, 
 |\pa_x|^{1/2} (w_{k}^{(l)}Sv)}_{L^2}
\\
 =&
 \Bigl\| w_{k}^{(l)} S|\pa_x|^{1/2} v \Bigr\|_{L^2}^2 
 + 
 X_{kl},
\end{align*}
where
\begin{align*}
 X_{kl}
 =&
 \JB{
   w_{k}^{(l)}S|\pa_x|^{1/2}v, \bigl[ |\pa_x|^{1/2}, w_{k}^{(l)}S \bigr] v
 }_{L^2}
 +
 \JB{ \bigl[w_{k}^{(l)}S,\pa_x \bigr] \mathcal{H}v, w_{k}^{(l)}Sv}_{L^2}\\
 &-
 \JB{ \bigl[ |\pa_x|^{1/2} \mathcal{H}, w_{k}^{(l)}S \bigr] \mathcal{H}v, 
 w_{k}^{(l)}S|\pa_x|^{1/2} v}_{L^2}
 -
 \JB{ \bigl[|\pa_x|^{1/2} \mathcal{H}, w_{k}^{(l)}S \bigr] \mathcal{H}v, 
     \bigl[ |\pa_x|^{1/2}, w_{k}^{(l)}S \bigr] v}_{L^2}.
\end{align*}
By using Lemma~\ref{lemma_commutator}, we can see that all the commutators 
appearing in $X_{kl}$ are $L^2$-bounded and their operator norms 
are dominated by 
\begin{align*}
 B_2(t)
 &=
 Ce^{C\|\Phi\|_{L^1}} 
 \bigl( 
   \|w\|_{W^{2,\infty}} + \|w\|_{W^{1,\infty}} \|\Phi\|_{L^{\infty}} 
 \bigr).
\end{align*}
Hence we obtain 
\begin{align*}
 \Bigl\| \sqrt{\Phi} S|\pa_x|^{1/2} v \Bigr\|_{L^2}^2
  &- \realpart \Jb{\Phi S|\pa_x|v,Sv}_{L^2}\\
 &=
 \sum_{l=0}^{1}\sum_{k\in I_N} \eta \realpart
 \biggl( 
  \Bigl\| w_{k}^{(l)} S|\pa_x|^{1/2} v \Bigr\|_{L^2}^2 
  - 
  \Jb{w_{k}^{(l)}S|\pa_x|v,w_{k}^{(l)}Sv}_{L^2}
 \biggr)\\
 &\le
 \sum_{l=0}^{1}\sum_{k\in I_N} \eta |X_{kl}| \\
 &\le
 C\eta B_2(t)
 \sum_{l=0}^{1}
 \sum_{k\in I_N} 
 \Bigl\| w_{k}^{(l)}S|\pa_x|^{1/2} v \Bigr\|_{L^2}  \bigl\| v \bigr\|_{L^2}
 + C\eta B_2(t)^2 \|v\|_{L^2}^2 \\
 &\le
 \frac{1}{2}\Bigl\| \sqrt{\Phi} S|\pa_x|^{1/2} v \Bigr\|_{L^2}^2
 +C\eta B_2(t)^2 \|v\|_{L^2}^2,
\end{align*}
where we have used the Young inequality in the last line. 
Therefore,
\begin{align}
 \frac{2}{|m|}\realpart \jb{\Phi S|\pa_x|v,Sv}_{L^2}
 \ge
 \frac{1}{|m|} \Bigl\| \sqrt{\Phi} S|\pa_x|^{1/2} v \Bigr\|_{L^2}^2
 -
 C\eta B_2(t)^2 \|v\|_{L^2}^2.
\label{est_smoothing_pre2}
\end{align}
From \eqref{est_smoothing_pre1} and \eqref{est_smoothing_pre2} 
it follows that 
\[
 \frac{d}{dt} \|Sv\|_{L^2}^2
 +
 \frac{1}{|m|} \Bigl\| \sqrt{\Phi} S|\pa_x|^{1/2} v \Bigr\|_{L^2}^2
 \le 
 2 \bigl| \jb{S \mathcal{L}_m v, Sv}_{L^2} \bigr|
  + 
  C \bigl( B_1(t)+\eta B_2(t)^2 \bigr) \|Sv\|_{L^2}^2.
\]
Finally, by using 
$\|\Phi\|_{L^1} \le C\eta \|w\|_{H^1}^2$,
$\|\Phi\|_{L^{\infty}}\le C \eta \|w\|_{W^{1,\infty}}^2$ 
and $\|\pa_x \Phi\|_{L^{\infty}} \le C \eta \|w\|_{W^{2,\infty}}^2$, 
we have 
\begin{align*} 
 B_1(t)+\eta B_2(t)^2
 \le&
 Ce^{C\eta \|w\|_{H^1}^2}\Bigl(
 \eta^2 \|w\|_{W^{1,\infty}}^4 + \eta\|w\|_{W^{2,\infty}}^2 
 + 
 \eta \sum_{k\in I_N} \|\mathcal{L}_{\mu_k}w_k\|_{H^1} \|w_k\|_{H^1}
 \Bigr)\\
 &+
 C\eta e^{C\eta \|w\|_{H^1}^2} 
 \bigl( 
   \|w\|_{W^{2,\infty}}^2 + C\eta^2 \|w\|_{W^{1,\infty}}^6 
 \bigr)\\
 \le&
  Ce^{C\eta \|w\|_{H^1}^2}\Bigl(
 \eta\|w\|_{W^{2,\infty}}^2 + \eta^3 \|w\|_{W^{1,\infty}}^6
 + 
 \eta \sum_{k\in I_N} \|\mathcal{L}_{\mu_k}w_k\|_{H^1} \|w_k\|_{H^1}
 \Bigr),
\end{align*}
which yields the desired conclusion. 
\qed

\medskip
\subsection*{Acknowledgments}
One of the authors (H.S.) would like to express his gratitude for warm 
hospitailty of Department of Mathematics, Yanbian University. Main parts of 
this work were done during his visit there. 
The authors thank Professor Soichiro Katayama for his useful 
conversations on this subject.

The work of C.~L. is supported by NNSFC under Grant No. 11461074. 
The work of H.~S. is supported by Grant-in-Aid for Scientific Research (C) 
(No.~25400161), JSPS.


\end{document}